\theoremstyle{plain}
\newtheorem{theorem}{Theorem}
\newtheorem{proposition}{Proposition}
\newtheorem*{2'}{Theorem 2'}
\newtheorem*{3'}{Theorem 3'}
\theoremstyle{remark}
\newtheorem*{Remark 1}{Remark 1}
\newtheorem*{Remark 2}{Remark 2}
\newtheorem*{Remark 3}{Remark 3}
\newtheorem*{Remark 4}{Remark 4}
\numberwithin{equation}{section}
\begin{document}

\title [Optimizing  Drift in Diffusive Search for  Random  Target]
{Optimizing the drift in a diffusive search for a random stationary target}

\author{Ross G. Pinsky}


\address{Department of Mathematics\\
Technion---Israel Institute of Technology\\
Haifa, 32000\\ Israel}
\email{ pinsky@math.technion.ac.il}

\urladdr{http://www.math.technion.ac.il/~pinsky/}

\subjclass[2010]{60J60} \keywords{random target, diffusive search, drift, optimization  }
\date{}

\begin{abstract}
Let $a\in\mathbb{R}$ denote an unknown stationary target  with a known distribution $\mu\in\mathcal{P(\mathbb{R}})$, the space of probability measures on $\mathbb{R}$. A  diffusive searcher $X(\cdot)$
sets out from the origin to locate the target. The time to locate
the target is $T_a=\inf\{t\ge0: X(t)=a\}$. The  searcher has a given constant diffusion rate $D>0$, but its drift $b$ can be set by the search designer
from a natural  admissible class $\mathcal{D}_\mu$ of drifts.
Thus, the diffusive searcher is a Markov process    generated by the operator $L=\frac D2\frac{d^2}{dx^2}+b(x)\frac d{dx}$.
For  a given drift $b$, the expected time of the search is
\begin{equation}
\int_{\mathbb{R}} (E^{(b)}_0T_a)\thinspace\mu(da).
\end{equation}
Our aim is to minimize this expected search time over all admissible drifts $b\in\mathcal{D}_\mu$.
For measures $\mu$ that satisfy a certain balance condition between their restriction to the positive axis and their restriction to the negative axis,
 a condition satisfied, in particular, by all symmetric measures, we can give a complete answer to the problem.
 We calculate the above infimum explicitly, we classify the measures for which the infimum is attained, and in the case that it is attained, we calculate the minimizing drift explicitly.
 For measures that do not satisfy the balance condition, we obtain partial results.

\end{abstract}

\maketitle
\section{Introduction and Statement of Results}\label{intro}
A number of recent papers have considered  a stochastic search  model for a stationary target  $a\in R^d$,
which might be random and have a known distribution attached to it, whereby a searcher sets off from a fixed point, say the origin, and performs  Brownian motion with diffusion constant $D$.
The searcher is also armed with   a (possibly space dependent) exponential resetting time, so that
if it has  failed to locate the target by that time, then it  begins its search anew from the origin.
One may be interested in several statistics, the most important one being  the expected time to locate the target. (In dimension one, the target is considered ``located'' when the process
hits the point $a$, while in dimensions two and higher, one chooses an $\epsilon>0$ and the target is said to be ``located''
when the process hits the $\epsilon$-ball centered at $a$.) Without the resetting, this expected time is infinite.
When the rate of the exponential clock is constant, the expected time to locate the target is finite; furthermore,
this jump-Brownian motion  process  possesses an  invariant probability density, call it $\nu$.
  See, for example, \cite{EM1, EM2,EMM}. For related models, see \cite{G,MV1,MV2} as well as the   references in all of the above  articles.

  It is well known that the Brownian motion with diffusion constant $D$ and with drift $\frac{D}2\frac{\nabla \nu}{\nu}$, that is the diffusion process generated by
$\frac D2\Delta+\frac{D}2\frac{\nabla \nu}{\nu}\cdot\nabla$, also has invariant probability density $\nu$.
In \cite{EMM}, for the case of constant  resetting rate in one dimension, it was shown that
the expected time to locate a target at the deterministic point $a\in\mathbb{R}$  for the jump-Brownian motion process  is less than the expected time for the
corresponding (non-jumping) diffusion process with the same invariant measure (generated by $\frac D2\frac{d^2}{dx^2}+\frac{D}2\frac{\nu'}{\nu}\frac{d}{dx}$)
to locate the target.
The above is partial motivation for the problem we consider in this paper; we believe it is also of some independent interest.

Let $a\in\mathbb{R}$ denote an unknown stationary target  with a known distribution $\mu\in\mathcal{P(\mathbb{R}})$, the space of probability measures on $\mathbb{R}$. A  diffusive searcher $X(\cdot)$ sets out from the origin to locate the target. The time to locate
the target is $T_a=\inf\{t\ge0: X(t)=a\}$. We assume that the diffusive searcher has a given constant diffusion rate $D>0$, but that its drift $b$ can be set by the search designer
from a natural  admissible class $\mathcal{D}_\mu$ of drifts, which we define below.
Thus, the  searcher is a Markov diffusion process    generated by the operator $L=\frac D2\frac{d^2}{dx^2}+b(x)\frac d{dx}$.
We will denote probabilities and expectations with respect to $X(\cdot)$ by $P_0^{(b)}$ and $E_0^{(b)}$.
For  a given drift $b$, the expected time of the search is
\begin{equation}
\int_{\mathbb{R}} (E^{(b)}_0T_a)\thinspace\mu(da).
\end{equation}
Our aim is to minimize this expected search time over all admissible drifts $b\in\mathcal{D}_\mu$.
We note that this same problem was recently considered in the physics literature \cite{K}; for more on this, see Remark 1 after Theorem \ref{thm2}.

We now discuss the influence of the  drift, which will lead us to the definition of the admissible class $\mathcal{D}_\mu$ of drifts.
In order to avoid trivialities, we will assume that the  support of $\mu$ has a non-empty intersection with both open half-lines.
(Otherwise, if say, $\mu$ is supported in $[0,\infty)$, then $\int_{\mathbb{R}} (E^{(b)}_0T_a)\thinspace\mu(da)$ is a decreasing function of the drift $b$ and converges to 0 as the drift converges pointwise to $+\infty$.)
For convenience only, we will assume that the origin is not an atom of the distribution $\mu$. We write $\mu$ in the form
\begin{equation}\label{mu}
\begin{aligned}
&\mu=(1-p)\mu_-+\thinspace p\mu_+,
\ \text{where}\ p\in(0,1), \ \mu_-\ \text{is a probability measure on}\ (-\infty,0)\\
& \text{and}\ \mu_+\ \text{is a probability measure on}\ (0,\infty).
\end{aligned}
\end{equation}
Define
\begin{equation}\label{support}
\begin{aligned}
&A_-(\mu)=\inf\{x\in(-\infty,0):\mu_-\big((-\infty,x]\big)>0\},\\
&  A_+(\mu)=\sup\{x\in(0,\infty):\mu_+\big([x,\infty)\big)>0\}.
\end{aligned}
\end{equation}
If $A_-(\mu)>-\infty$ ($A_+(\mu)<\infty$), then there is no point in searching to the left of $A_-(\mu)$ (to the right of $A_+(\mu)$), so we should  let $b(x)=+\infty$ for $x<A_-(\mu)$
($b(x)=-\infty$, for $x>A_+(\mu)$).
If $A_-(\mu)>-\infty$ ($A_+(\mu)<\infty$), then the diffusion can reach $A_-(\mu)$ ($A_+(\mu)$) if an only if $\int_{A_-}dx\exp(-\frac2D\int_0^xb(y)dy)<\infty$
($\int^{A_+}dx\exp(-\frac2D\int_0^xb(y)dy)<\infty$). (See \cite{P} or alternatively, the last paragraph of the proof of part (ii) of Theorem \ref{thm2} below.) If $A_-(\mu)>-\infty$ ($A_+(\mu)<\infty$) and the diffusion can reach $A_-(\mu)$ ($A_+(\mu)$), then setting $b(x)=+\infty$ for $x<A_-(\mu)$
($b(x)=-\infty$ for $x>A_+(\mu)$) is equivalent to imposing the reflecting boundary condition at $A_-(\mu)$ ($A_+(\mu)$).
 In terms of the generator $L$, the reflecting boundary condition at $A_-(\mu)$ (at $A_+(\mu)$) is equivalent to imposing the Neumann boundary condition $u'(A_-(\mu))=0$ ($u'(A_+(\mu))=0$).
 The above discussion  leads us to define the following condition on the  drift $b$\thinspace:
\begin{equation}\label{driftcond}
\begin{aligned}
 & b\ \text{\it is piecewise continuous and locally bounded on} \ (A_-(\mu),A_+(\mu)), \ \text{and}\\
 &\text{ \it is equal to}
 +\infty\ \text{\it on} \ (-\infty,A_-(\mu))\ \text{ \it and  to}\ -\infty\ \text{\it on}\ (A_+(\mu),\infty).\\
 &\text{\it Also, if}\ A_-(\mu)\ \text{\it is an atom for}\ \mu,\ \text{\it then}\ b\ \text{\it is locally bounded on}\  [A_-(\mu),A_+(\mu)),\\
& \text{\it and if}\ A_+(\mu)\ \text{\it is an atom for }\ \mu,\ \text{\it then}\ b\ \text{\it is locally bounded on}\ (A_-(\mu),A_+(\mu)].
\end{aligned}
\end{equation}
\noindent  \bf Remark.\rm\
In particular, if $\mu$ has atoms at both $A_-(\mu)$ and $A_+(\mu)$, then the drifts in \eqref{driftcond} are bounded
on $(A_-(\mu),A_+(\mu))$.


As is well-known, the expected hitting time $E^{(b)}_0T_a$  is finite for all
$a\in (A_-(\mu),A_+(\mu))$  if and only if
the diffusion $X(\cdot)$ is positive recurrent. Positive recurrence for drifts satisfying \eqref{driftcond} is equivalent to the condition
\begin{equation}\label{posrec}
\int_{A_-(\mu)}^{A_+(\mu)} dx\exp(\frac2D\int_0^xb(y)dy)<\infty.
\end{equation}
(See \cite{P}.)
We can now define the class of admissible drifts.

\noindent\bf The Class $\mathcal{D}_\mu$ of Admissible Drifts:\rm\
\begin{equation}\label{admiss}
 \begin{aligned}
& \mathcal{D}_\mu\   \text{is the class of drifts}\
 b\\
 &\text{satisfying \eqref{driftcond} and \eqref{posrec}}.
\end{aligned}
\end{equation}

Let
\begin{equation}\label{tails}
\overline{\mu}_-(x)=\mu_-((-\infty,x)),\ \text{for}\ x\le0, \ \ \overline{\mu}_+(x)=\mu_+((x,\infty)), \ \text{for}\ x\ge0,
\end{equation}
 denote the tails of $\mu_-$ and $\mu_+$.

We begin with the following result.
\begin{theorem}\label{thm1}
Let the target distribution $\mu$ satisfy $\mu=(1-p)\mu_-\thinspace +p\mu_+$  as in $\eqref{mu}$, let $A_-(\mu)$ and $A_+(\mu)$ be as in \eqref{support} and let
$\overline{\mu}_-(x)$ and $\overline{\mu}_+(x)$ be as in \eqref{tails}. Let the class of admissible drifts $\mathcal{D}_\mu$ be as in \eqref{admiss}.
If  $\int_{-\infty}^0\overline{\mu}_-^\frac12(x)dx=\infty$ and $\int_0^\infty\overline\mu_+^\frac12(x)dx=\infty$, then
\noindent $\int_{\mathbb{R}} (E^{(b)}_0T_a)\thinspace\mu(da)=\infty$, for all $b\in\mathcal{D}_\mu$.
\end{theorem}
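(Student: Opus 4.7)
The plan is to reduce the claim to a two-weight Cauchy--Schwarz inequality in which the tail hypothesis forces one factor to be infinite while admissibility keeps the other factor finite.

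First, I would observe that the hypothesis forces $A_-(\mu)=-\infty$ and $A_+(\mu)=+\infty$: since $\overline{\mu}_+\le 1$ is supported in $[0,A_+(\mu)]$, a finite $A_+(\mu)$ would make $\int_0^\infty \overline{\mu}_+^{1/2}$ finite, contradicting the hypothesis; similarly on the negative side. Setting $B(x):=\int_0^x b(y)\,dy$, $\rho(x):=e^{-\frac{2}{D}B(x)}$, and $\sigma(x):=e^{\frac{2}{D}B(x)}$, we have $\rho\sigma\equiv 1$, and admissibility \eqref{posrec} reads $\int_{-\infty}^{\infty}\sigma(x)\,dx<\infty$.

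Next, I would derive the standard hitting-time representation by solving $\frac{D}{2}u''+bu'=-1$ with $u(a)=0$, using the boundary behavior at $\pm\infty$ dictated by positive recurrence. For $a>0$ this gives
\begin{equation*}
E_0^{(b)}T_a=\frac{2}{D}\int_0^a \rho(y)\Bigl(\int_{-\infty}^y \sigma(z)\,dz\Bigr)dy,
\end{equation*}
with a mirror formula for $a<0$. Substituting into $\int_{\mathbb R} E_0^{(b)}T_a\,\mu(da)$ and applying Fubini, the positive-axis contribution becomes
\begin{equation*}
\frac{2p}{D}\int_0^{\infty}\rho(y)\,\overline{\mu}_+(y)\Bigl(\int_{-\infty}^y\sigma(z)\,dz\Bigr)dy\;\ge\;\frac{2pC}{D}\int_0^{\infty}\rho(y)\,\overline{\mu}_+(y)\,dy,
\end{equation*}
where $C:=\int_{-\infty}^{0}\sigma(z)\,dz\in(0,\infty)$.

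Finally, I would apply Cauchy--Schwarz with the conjugate weights $\rho$ and $\sigma$: writing $\overline{\mu}_+^{1/2}=\sqrt{\rho\,\overline{\mu}_+}\cdot\sqrt{\sigma}$,
\begin{equation*}
\int_0^\infty \overline{\mu}_+^{1/2}(y)\,dy\;\le\;\Bigl(\int_0^\infty \rho(y)\,\overline{\mu}_+(y)\,dy\Bigr)^{1/2}\Bigl(\int_0^\infty \sigma(y)\,dy\Bigr)^{1/2}.
\end{equation*}
The second factor is finite by admissibility, while the left-hand side is infinite by hypothesis, so the first factor must be infinite and the positive-axis contribution blows up. I do not anticipate a serious obstacle; the only delicate point is justifying the hitting-time formula when the boundaries at $\pm\infty$ are inaccessible, which is standard from positive recurrence. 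Note that only the positive-axis tail hypothesis is actually used in this route, so an analogous argument from the negative side would give an independent proof under the mirror hypothesis.
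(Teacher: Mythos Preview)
Your argument is correct and takes a genuinely different route from the paper. The paper proves Theorem~\ref{thm1} by constructing truncated approximating measures that satisfy the square root balance condition, invoking the explicit infimum formula from Theorem~\ref{thm2} for these, and then comparing with $\mu$ via the monotonicity of $E_0^{(b)}T_a$ in $a$; in particular the paper's proof is logically downstream of Theorem~\ref{thm2}. Your proof is self-contained: it needs only the hitting-time formula of Proposition~\ref{exphit}, Tonelli, and a one-line Cauchy--Schwarz with the conjugate weights $\rho=e^{-\frac{2}{D}B}$ and $\sigma=e^{\frac{2}{D}B}$, exploiting $\rho\sigma\equiv 1$. This is shorter and completely independent of the variational analysis.

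There is a significant byproduct you should make explicit. As you observe, only the hypothesis $\int_0^\infty\overline{\mu}_+^{1/2}=\infty$ is actually used. The sole place the negative-side hypothesis enters your write-up is in forcing $A_-(\mu)=-\infty$, but that is unnecessary: Proposition~\ref{exphit} gives $E_0^{(b)}T_a=\frac{2}{D}\int_0^a\rho(y)\bigl(\int_{A_-(\mu)}^y\sigma(z)\,dz\bigr)\,dy$ for $a>0$ regardless of whether $A_-(\mu)$ is finite, and one simply takes $C=\int_{A_-(\mu)}^0\sigma(z)\,dz\in(0,\infty)$. Hence your Cauchy--Schwarz step in fact shows that $\int_{\mathbb{R}}(E_0^{(b)}T_a)\,\mu(da)=\infty$ for every $b\in\mathcal{D}_\mu$ as soon as \emph{either} $\int_0^\infty\overline{\mu}_+^{1/2}=\infty$ \emph{or} $\int_{-\infty}^0\overline{\mu}_-^{1/2}=\infty$. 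That is precisely the paper's Open Problem~2, answered in the affirmative.
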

\bf\noindent Remark.\rm\ Note of course that $\int_0^\infty\overline\mu_+^\frac12(x)dx=\int_0^{A_+(\mu)}\overline\mu_+^\frac12(x)dx$
and $\int_{-\infty}^0\overline{\mu}_-^\frac12(x)dx=\int_{A_-(\mu)}^0\overline{\mu}_-^\frac12(x)dx$, and that the first integral (second integral) is always
finite if $A_+(\mu)<\infty$  ($A_-(\mu)<\infty$).

The following simple proposition gives a sufficient moment condition for integrals of the above type to be finite.
\begin{proposition}\label{sufficientmoment}
Let $\nu$ be a probability measure on $(0,\infty)$ and let $\overline{\nu}(x)=\nu((x,\infty))$.
If $\int_0^\infty x^2|\log x|^{1+\epsilon}\nu(dx)<\infty$, for some $\epsilon>0$, then $\int_0^\infty\overline{\nu}^\frac12(x)dx<\infty$.
The condition $\int_0^\infty x^2|\log x|^{1-\epsilon}\nu(dx)<\infty$, for all $\epsilon\in(0,1)$,  is not sufficient for the finiteness of  $\int_0^\infty\overline{\nu}^\frac12(x)dx$.
\end{proposition}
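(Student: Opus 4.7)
My approach is Cauchy--Schwarz with a logarithmic weight. Fix $A>e$; the contribution of $[0,A]$ to $\int_0^\infty\overline{\nu}^{1/2}(x)\,dx$ is at most $A$ since $\overline\nu\le 1$. For $x>A$ I write
\[
\overline\nu^{1/2}(x)=\frac{1}{\bigl(x(\log x)^{1+\epsilon}\bigr)^{1/2}}\cdot\bigl(x(\log x)^{1+\epsilon}\overline\nu(x)\bigr)^{1/2}
\]
and apply Cauchy--Schwarz. The first factor squared integrates to $\int_A^\infty\frac{dx}{x(\log x)^{1+\epsilon}}$, which is finite precisely because $\epsilon>0$. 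For the second factor squared, Fubini gives
\[
\int_A^\infty x(\log x)^{1+\epsilon}\overline\nu(x)\,dx=\int_A^\infty\nu(dy)\int_A^y x(\log x)^{1+\epsilon}\,dx\le\int_A^\infty y^2(\log y)^{1+\epsilon}\,\nu(dy),
\]
where I used that $x\mapsto x(\log x)^{1+\epsilon}$ is increasing on $(A,\infty)$. The right-hand side is finite by hypothesis. I do not foresee any real obstacle here; the only design choice is the weight $x(\log x)^{1+\epsilon}$, chosen so that both sides of Cauchy--Schwarz are simultaneously just barely convergent.

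\textbf{Plan for the counterexample.} For sharpness I would place $\nu$ right at the borderline $\overline\nu(x)\sim c\,x^{-2}(\log x)^{-2}$. Concretely, take a probability density equal to $\frac{2c}{x^3(\log x)^2}+\frac{2c}{x^3(\log x)^3}$ on $[e^2,\infty)$, and arbitrary (say uniform) on $(0,e^2)$, with $c$ fixed so that the total mass is $1$. Then $\overline\nu(x)=c/(x^2(\log x)^2)$ for $x\ge e^2$, so $\overline\nu^{1/2}(x)=c^{1/2}/(x\log x)$ and $\int_{e^2}^\infty dx/(x\log x)=\infty$ is the classical non-integrable borderline. On the other hand, for every $\epsilon\in(0,1)$,
\[
\int_{e^2}^\infty x^2(\log x)^{1-\epsilon}\,\nu(dx)\le C\int_{e^2}^\infty\frac{dx}{x(\log x)^{1+\epsilon}}<\infty,
\]
so the hypothesis $\int_0^\infty x^2|\log x|^{1-\epsilon}\nu(dx)<\infty$ holds for every $\epsilon\in(0,1)$ while the conclusion fails. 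The only minor care is in verifying that $\nu$ is a genuine probability measure, which is just a matter of fixing the constant $c$ appropriately.
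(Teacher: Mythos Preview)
Your proof is correct and follows essentially the same route as the paper's. Both arguments split at a finite threshold, apply Cauchy--Schwarz with the weight $x(\log x)^{1+\epsilon}$, and then control $\int x(\log x)^{1+\epsilon}\overline\nu(x)\,dx$ by the moment hypothesis (the paper says ``integration by parts'' where you say ``Fubini,'' but the content is identical); for the counterexample both take $\overline\nu(x)\asymp x^{-2}(\log x)^{-2}$ for large $x$, with your version just being more explicit about the density and normalization.
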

\begin{proof}
For $\epsilon>0$,
$$
\int_0^\infty\overline{\nu}^\frac12(x)dx\le 2+ C\Big(\int_2^\infty x|\log x|^{1+\epsilon}\thinspace\overline{\nu}(x)\Big)^\frac12,
$$
where $C=\big(\int_2^\infty\frac1{x|\log x|^{1+\epsilon}}dx\big)^\frac12<\infty$.
An integration by parts shows that the integral on the right hand side above is finite if
$\int_0^\infty x^2(|\log x|)^{1+\epsilon}\nu(dx)<\infty$.
This proves the first claim in the proposition.
For the second claim, let
$\nu$ be a distribution that satisfies $\overline{\nu}(x)=\frac1{x^2(|\log x|)^2}$,    for $x\ge 2$.
Then  $\int_0^\infty x^2|\log x|^{1-\epsilon}\nu(dx)<\infty$, for all $\epsilon\in(0,1)$,  but  $\int_0^\infty\overline{\nu}^\frac12(x)dx=\infty$.
\end{proof}

In the case that $\int_{-\infty}^0\overline{\mu}_-^\frac12(x)dx$ and $\int_0^\infty\overline\mu_+^\frac12(x)dx$ are finite, the following condition
 on the target distribution $\mu$ will play a seminal role.
\medskip

\noindent \bf Square Root Balance Condition.\rm\
The target distribution $\mu=(1-p)\mu_-+p\mu_+$ is such that
the integrals $\int_0^\infty\overline\mu_+^\frac12(x)dx$ and $\int_{-\infty}^0\overline{\mu}_-^\frac12(x)dx$ are finite and satisfy
\begin{equation}\label{srbc}
\frac{\int_0^\infty\overline\mu_+^\frac12(x)dx}{\int_{-\infty}^0\overline{\mu}_-^\frac12(x)dx}=\frac{(1-p)\log(1-p)}{p\log p}.
\end{equation}
\noindent \bf Remark.\rm\ A symmetric target distribution (the case in which $\overline{\mu}_+(x)=\overline{\mu}_-(-x)$, for
$x\in(0,\infty)$, and $p=\frac12$) always satisfies the square root balance condition.

When the target distribution satisfies  the square root
balance condition, we can give a complete answer to the optimization problem.

\begin{theorem}\label{thm2}
Let the target distribution $\mu$ satisfy $\mu=(1-p)\mu_-\thinspace +p\mu_+$  as in $\eqref{mu}$, let $A_-(\mu)$ and $A_+(\mu)$ be as in \eqref{support} and let
$\overline{\mu}_-(x)$ and $\overline{\mu}_+(x)$ be as in \eqref{tails}. Let the class of admissible drifts $\mathcal{D}_\mu$ be as in \eqref{admiss}.
Assume also that the target distribution satisfies the square root balance condition \eqref{srbc}.
Then

\noindent i.
\begin{equation}\label{infb}
\begin{aligned}
&\inf_{b\in \mathcal{D}_\mu}\int_{\mathbb{R}} (E^{(b)}_0T_a)\thinspace\mu(da)=\\
&\frac2D\Big(\frac{1-p}{|\log p| }\thinspace(\int_{-\infty}^0\overline{\mu}_-^\frac12(x)dx)^2+\frac p{|\log(1-p)|}\thinspace(\int_0^\infty\overline{\mu}_+^\frac12(x)dx)^2\Big).
\end{aligned}
\end{equation}
In particular, in the case of a symmetric target distribution,
\begin{equation}\label{symmetriccaseinf}
\inf_{b\in \mathcal{D}_\mu}\int_{\mathbb{R}} (E^{(b)}_0T_a)\thinspace\mu(da)=\frac2{D\log 2}\thinspace(\int_0^\infty\overline{\mu}_+^\frac12(x)dx)^2.
\end{equation}

\noindent ii. The infimum in (i) is attained if and only if the restriction of   $\mu$  to $(A_-(\mu),A_+(\mu))$   is absolutely continuous with a piecewise continuous, locally bounded density.
($\mu$ may possess an atom at  $A_-(\mu)$ and/or at  $A_+(\mu)$.)
This infimim is attained  uniquely at the drift
\begin{equation}\label{bestb}
b_0(x)=\begin{cases}+\infty,\ x<A_-(\mu);\\
D\Big(\frac14\frac{\overline{\mu}_-\thinspace'(x)}{\overline{\mu}_-(x)}-\frac{|\log p|}{2\int_{-\infty}^0\overline{\mu}_-^\frac12(y)dy}\thinspace\overline{\mu}_-^\frac12(x)\Big),\ A_-(\mu)<x<0;\\
D\Big(\frac14\frac{\overline{\mu}_+\thinspace'(x)}{\overline{\mu}_+(x)}+\frac{|\log(1-p)|}{2\int_0^\infty\overline{\mu}_+^\frac12(y)dy}\thinspace\overline{\mu}_+^\frac12(x)\Big),\ 0<x<A_+(\mu);\\
-\infty,\ x>A_+(\mu).
\end{cases}
\end{equation}
If $\int_{A_-(\mu)}\overline{\mu}_-^{\thinspace-\frac12}(x)dx=\infty$
($\int_{A_+(\mu)}\overline{\mu}_+^{\thinspace-\frac12}(x)dx=\infty$),
then this drift prevents the diffusion $X(\cdot)$ from reaching $A_-(\mu)$ $\big(A_+(\mu)\big)$, and
thus there is no need to define $b_0$ on
 $(-\infty, A_-(\mu)) \big((A_+(\mu),\infty)\big)$.
Otherwise the diffusion $X(\cdot)$ can reach  $A_-(\mu)$ $\big(A_+(\mu)\big)$, and the drift of $+\infty$ to the left of $A_-(\mu)$ ($-\infty$ to the right of $A_+(\mu)$) causes the diffusion to be reflected there.

\noindent iii. For those $\mu$ for which the infimum in (i) is not attained, the infimum is approached by a sequence $\{b_n\}_{n=1}^\infty$ of drifts, with $b_n$
given by \eqref{bestb} with $\mu=(1-p)\mu_-+p\mu_+$ replaced by $\mu_n=(1-p)\mu_{-;n}+p\mu_{+;n}$, where $\mu_n$  satisfies the square root balance condition \eqref{srbc},
is of the type described in (ii) and
 converges weakly to $\mu$.
\end{theorem}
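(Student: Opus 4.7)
The plan is to reduce the optimization, via two successive reparameterizations, to a one-dimensional problem solved by Cauchy--Schwarz. For $b\in\mathcal{D}_\mu$ set $q(x)=\exp\bigl(\tfrac{2}{D}\int_0^x b(y)\,dy\bigr)$; solving $Lu=-1$ with the appropriate reflecting or entrance boundary condition at $A_\pm(\mu)$ gives the standard formula
\[
E^{(b)}_0 T_a=\frac{2}{D}\int_0^a \frac{1}{q(y)}\int_{A_-(\mu)}^y q(z)\,dz\,dy \quad (a>0),
\]
with the mirror formula for $a<0$. Setting $B_-=\int_{A_-(\mu)}^0 q$ and $B_+=\int_0^{A_+(\mu)} q$ (both finite by \eqref{posrec}) and applying Fubini, $\int E^{(b)}_0 T_a\,\mu(da)$ splits into a positive and a negative piece coupled only through $(B_-,B_+)$; since $q$ on the two half-axes can be chosen independently apart from $q(0)=1$, the problem decouples once $B_\pm$ are fixed.

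Next, on the positive side use the substitution $r=R_+(y)=\int_0^y \overline{\mu}_+^{1/2}$, a monotone bijection of $(0,A_+(\mu))$ onto $(0,M_+)$ where $M_+=\int_0^\infty \overline{\mu}_+^{1/2}$, and introduce $H(r)=B_-+\int_0^{R_+^{-1}(r)} q$. A direct calculation turns the positive-side integral into $\int_0^{M_+} H/H'\,dr$ with $H(0)=B_-$, $H(M_+)=B_-+B_+$; an analogous substitution on the negative side produces $\int_0^{M_-} G/G'\,ds$ with $G(0)=B_+$, $G(M_-)=B_-+B_+$. Cauchy--Schwarz applied to $v=\log H$ gives
\[
\int_0^{M_+}\frac{H}{H'}\,dr=\int_0^{M_+}\frac{1}{v'}\,dr\ \ge\ \frac{M_+^2}{v(M_+)-v(0)}=\frac{M_+^2}{\log(1+B_+/B_-)},
\]
with equality if and only if $H$ is exponential (similarly for $G$), yielding the lower bound
\[
\int E^{(b)}_0 T_a\,\mu(da)\ \ge\ \frac{2}{D}\!\left[\frac{pM_+^2}{\log(1+B_+/B_-)}+\frac{(1-p)M_-^2}{\log(1+B_-/B_+)}\right].
\]
Minimizing the right-hand side in $t=B_+/B_-\in(0,\infty)$, the critical-point equation simplifies precisely to the square-root balance condition \eqref{srbc}, satisfied at $t_\ast=p/(1-p)$; substituting $t_\ast$ (so $\log(1+t_\ast)=|\log(1-p)|$ and $\log(1+1/t_\ast)=|\log p|$) produces \eqref{infb}, proving (i).

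For (ii), reversing the reparameterization with the minimizing exponential $H$ (rate $\lambda_+=|\log(1-p)|/M_+$) gives $q_0(y)=\overline{\mu}_+^{1/2}(y)\,e^{\lambda_+ R_+(y)}$ on the positive axis (and the mirror formula on the left), the normalization $q_0(0)=1$ being automatic from $\overline{\mu}_\pm(0)=1$; \eqref{srbc} is exactly the consistency condition that makes the values of $B_\pm$ computed from $q_0$ agree with the optimal ratio $t_\ast$. Taking $b_0=\tfrac{D}{2}(\log q_0)'$ recovers \eqref{bestb}, and this $b_0$ lies in $\mathcal{D}_\mu$ if and only if $\log\overline{\mu}_\pm$ is absolutely continuous with piecewise continuous locally bounded derivative on $(A_-(\mu),A_+(\mu))$, i.e.\ the stated density regularity of $\mu$ holds. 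For (iii) one approximates $\mu$ weakly by absolutely continuous $\mu_n$ of the form allowed in (ii), keeping $p$ fixed and restoring \eqref{srbc} at each $n$ by a small scaling perturbation of one of $\mu_{\pm;n}$; then $\int \overline{\mu}_{\pm;n}^{1/2}\to M_\pm$ and a limit argument shows the associated drifts $b_n$ drive $\int E_0^{(\cdot)} T_a\,\mu(da)$ to \eqref{infb}. The main obstacle I anticipate is the fine boundary analysis at $A_\pm(\mu)$: verifying that the prescription $b_0=\pm\infty$ outside $(A_-(\mu),A_+(\mu))$ corresponds to reflection when $\int^{A_+(\mu)}\overline{\mu}_+^{-1/2}<\infty$ and to an unreachable entrance boundary otherwise (and symmetrically on the left), and that this dichotomy is preserved under the approximation used for (iii).
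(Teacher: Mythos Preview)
Your approach is genuinely different from the paper's and, modulo one gap noted below, correct and more direct. The paper argues variationally: it recasts the cost as a functional $G_2(F)$ of the distribution function $F$ in \eqref{def-F}, computes the Euler--Lagrange equations \eqref{key1}, solves them to obtain the critical $F_0$ (and hence $b_0$), and then---because it cannot show $G_2$ is convex---proves optimality by combining convexity of $G_1$ in $b$ (Proposition~\ref{convex}) with a technical derivative lemma (Proposition~\ref{G1G2}), the latter proved only for measures with atoms at both endpoints, the general case following by approximation. Your route avoids all of this: after the substitution $r=R_+(y)=\int_0^y\overline{\mu}_+^{1/2}$ the positive-side cost is exactly $\int_0^{M_+} H/H'\,dr$, and Cauchy--Schwarz on $1=\sqrt{v'}\cdot 1/\sqrt{v'}$ (with $v=\log H$) gives the sharp lower bound with the exponential equality case singled out automatically. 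This yields the lower bound for \emph{all} $\mu$ satisfying the balance condition in one stroke, without the convexity/approximation apparatus, and identifies the minimizer (when it exists) via the Cauchy--Schwarz equality case. The paper's route, by contrast, reveals why the balance condition is the \emph{only} case in which a critical point exists, which feeds directly into Theorem~\ref{thm3}.

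There is one genuine gap in your write-up. You assert that the critical point $t_\ast=p/(1-p)$ of
\[
f(t)=\frac{pM_+^2}{\log(1+t)}+\frac{(1-p)M_-^2}{\log(1+1/t)}
\]
is its \emph{global} minimum, but you only check the first-order condition. Since $f\to\infty$ at $0^+$ and $\infty$, it suffices to show the critical point is unique; equivalently, that $g(t)=\sqrt{t}\,\log(1+1/t)/\log(1+t)$ is strictly monotone on $(0,\infty)$. A short computation gives
\[
t(1+t)\,\frac{g'(t)}{g(t)}=\frac{1+t}{2}-\frac{1}{\log(1+1/t)}-\frac{t}{\log(1+t)},
\]
and the elementary bound $\log(1+x)<x$ shows the last two terms exceed $t$ and $1$ respectively, so the right-hand side is $<\tfrac{1+t}{2}-(t+1)<0$. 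Hence $g$ is strictly decreasing, the critical point is unique, and your lower bound does equal $f(t_\ast)$. Without this step your chain of inequalities gives only $\inf_b\ge \tfrac{2}{D}\min_t f(t)$, which could in principle be strictly below $\tfrac{2}{D}f(t_\ast)$. The remaining concerns you flag (boundary behaviour at $A_\pm(\mu)$, and the approximation for part~(iii)) are handled in the paper essentially as you outline; see the final paragraph of the proof of part~(ii) and the sandwiching construction in the proof of part~(i).
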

\bf \noindent Remark 1.\rm\ After this paper was competed and placed on the Mathematics ArXiv, I was directed to
 \cite{K} by one of its coauthors. That paper, which appears in the physics literature, treats the same problem
  considered here. In particular, in the case that $\mu$ is symmetric and possesses a density, the authors found that  $b_0$ from
 \eqref{bestb} (with $p=\frac12$ and $\mu_+(x)=\mu_-(-x)$) is a critical point of the map $b\to\int_{\mathbb{R}} (E^{(b)}_0T_a)\thinspace\mu(da)$, and they
 calculated the corresponding expected search time, obtaining   the expression on the righthand side of
\eqref{symmetriccaseinf}. They   stated that this search time is optimal.

\medskip

\noindent \bf Remark 2.\rm\ Let
\begin{equation*}
\text{EV}(\mu_-):=\int_{-\infty}^0x\mu_-(dx),\ \text{EV}(\mu_+):=\int_0^\infty x\mu_+(dx)
\end{equation*}
denote respectively the expected values of random variables distributed according to $\mu_-$ and according to $\mu_+$.
Since $|\text{EV}(\mu_-)|=\int_{-\infty}^0\overline{\mu}_-(x)dx$ and $\text{EV}(\mu_+)=
\int_0^\infty\overline{\mu}_+(x)dx$, it follows from part (i) of the theorem that
\begin{equation*}
\inf_{b\in \mathcal{D}_\mu}\int_{\mathbb{R}} (E^{(b)}_0T_a)\thinspace\mu(da)\ge
\frac2D\Big(\frac{1-p}{|\log p| }(\text{EV}(\mu_-))^2+\frac p{|\log(1-p)|}(\text{EV}(\mu_+))^2\Big),
\end{equation*}
with equality if and only if $\mu_-$ and $\mu_+$ are the degenerate probability measures $\delta_{A_-(\mu)}$ and $\delta_{A_+(\mu)}$ respectively.
In particular, in the case that the target distribution $\mu$ is symmetric,  then $\text{AvgDist}(\mu):=\text{EV}(\mu_+)$ is the expected distance of the target to the origin, and
\begin{equation}\label{infavgdist}
\inf_{b\in \mathcal{D}_\mu}\int_{\mathbb{R}} (E^{(b)}_0T_a)\thinspace\mu(da)\ge\frac2{D\log2}(\text{AvgDist}(\mu))^2,
\end{equation}
with equality if and only if the target distribution is $\mu=\frac12\delta_{-A}+\frac12\delta_A$, where $A=-A_-(\mu)=A_+(\mu)$.
In section \ref{examples}, it is shown that for a number of families of symmetric distributions,
the ratio of $\inf_{b\in \mathcal{D}_\mu}\int_{\mathbb{R}} (E^{(b)}_0T_a)\thinspace\mu(da)$ to
$(\text{AvgDist}(\mu))^2$ is constant within each family, that is, independent  of the particular parameter.
\medskip

\noindent
\bf Remark 3.\rm\ Note that in part (ii), if $\mu_+$ does not have an
atom at $A_+(\mu)$ and  has a density that is differentiable at $A_+(\mu)$, then
this density vanishes at least to order one, and thus $\overline{\mu}_+$ vanishes at least to order two. Thus,
$\int_{A_+(\mu)}\overline{\mu}_+^{\thinspace-\frac12}(x)dx=\infty$, and
the diffusion with optimal drift $b_0$ cannot reach $A_+(\mu)$. However, if $\mu_+$ has an atom at $A_+(\mu)$, or if it doesn't
have an atom at $A_+(\mu)$ and its density vanishes to an order less than one at $A_+(\mu)$, then
$\int_{A_+(\mu)}\overline{\mu}_+^{\thinspace-\frac12}(x)dx<\infty$, and the diffusion with optimal drift can reach $A_+(\mu)$.
The same considerations hold at $A_-(\mu)$.

\medskip

In section \ref{examples} we  illustrate Theorem \ref{thm2} with a number of examples.
\medskip

We now turn to the case that the target distribution does not satisfy the square root balance
condition \eqref{srbc}. Here we have only partial results.
\begin{theorem}\label{thm3}
Let the target distribution $\mu$ satisfy $\mu=(1-p)\mu_-\thinspace +p\mu_+$  as in $\eqref{mu}$, let $A_-(\mu)$ and $A_+(\mu)$ be as in \eqref{support} and let
$\overline{\mu}_-(x)$ and $\overline{\mu}_+(x)$ be as in \eqref{tails}. Let the class of admissible drifts $\mathcal{D}_\mu$ be as in \eqref{admiss}.
Assume also that the target distribution does not satisfy the square root balance condition \eqref{srbc}, but
that  $\int_0^\infty\overline\mu_+^\frac12(x)dx$ and $\int_{-\infty}^0\overline{\mu}_-^\frac12(x)dx$ are finite.
Then

\noindent i.  $\inf_{b\in \mathcal{D}_\mu}\int_{\mathbb{R}} (E^{(b)}_0T_a)\thinspace\mu(da)$ is not attained.

\noindent ii.
\begin{equation}\label{notinf}
\begin{aligned}
&\inf_{b\in \mathcal{D}_\mu}\int_{\mathbb{R}} (E^{(b)}_0T_a)\thinspace\mu(da)<\\
&\frac2D\Big(\frac{1-p}{|\log p| }\thinspace\big(\int_{-\infty}^0\overline{\mu}_-^\frac12(x)dx\big)^2+\frac p{|\log(1-p)|}\thinspace\big(\int_0^\infty\overline{\mu}_+^\frac12(x)dx\big)^2\Big)-\\
&\frac2D\Big(\frac{1-p}{|\log p| }\thinspace\int_{-\infty}^0\overline{\mu}_-^\frac12(x)dx-\frac p{|\log(1-p)|}\thinspace\int_0^\infty\overline{\mu}_+^\frac12(x)dx\Big)^2.
\end{aligned}
\end{equation}

\noindent iii. If  $\mu$ restricted to $(A_-(\mu),A_+(\mu))$   is absolutely continuous with a piecewise continuous, locally bounded density on $(A_-(\mu),A_+(\mu))$
($\mu$ may possess an atom at  $A_-(\mu)$ and/or at  $A_+(\mu)$), then $\int_{\mathbb{R}} (E^{(b)}_0T_a)\thinspace\mu(da)$ is equal to the righthand side of \eqref{notinf} when
$b$ is given by \eqref{bestb}.

\end{theorem}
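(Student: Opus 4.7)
Part (iii) is proved by direct substitution. With $\mu$ absolutely continuous on $(A_-(\mu),A_+(\mu))$ having a piecewise continuous, locally bounded density, the drift $b_0$ of \eqref{bestb} lies in $\mathcal{D}_\mu$. Integration yields $\psi(x):=\exp\bigl(\tfrac{2}{D}\int_0^x b_0(y)\,dy\bigr)=\overline{\mu}_+^{1/2}(x)\exp\bigl(\alpha_+H_+(x)\bigr)$ on $(0,A_+(\mu))$, where $\alpha_+:=|\log(1-p)|/A_+$ and $H_+(x):=\int_0^x\overline{\mu}_+^{1/2}(y)\,dy$, with an analogous formula on $(A_-(\mu),0)$. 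Substituting $\psi$ into the standard hitting-time formula $E_0^{(b)}T_a=\tfrac{2}{D}\int_0^a\psi(x)^{-1}\int_{A_-(\mu)}^x\psi(v)\,dv\,dx$ for $a>0$ (with the mirror version for $a<0$) and applying Fubini to exchange the $\mu_\pm$ integrals with the hitting-time integrals reduces everything to elementary integrals via $H_\pm'=\mp\overline{\mu}_\pm^{1/2}$. The four resulting terms rearrange exactly into the right-hand side of \eqref{notinf}.

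For part (i), I would carry out a full variational analysis of $F(\cdot;\mu)$. Split $\psi$ into $f:=\psi|_{(0,A_+(\mu))}$ and $g:=\psi|_{(A_-(\mu),0)}$ with continuity $f(0)=g(0)=1$, and set $\Phi_+(x):=Z_-(g)+\int_0^x f(y)\,dy$ and $\Phi_-(x):=\int_x^0 g(y)\,dy+Z_+(f)$, where $Z_-(g):=\int_{A_-(\mu)}^0 g$ and $Z_+(f):=\int_0^{A_+(\mu)}f$. The functional $F$ then becomes a sum of terms of the form $\int\overline{\mu}_\pm\Phi_\pm/\Phi_\pm'$. Variations with $\delta f(0)=\delta g(0)=0$ yield (a) interior Euler--Lagrange equations forcing $\Phi_\pm=K_\pm\exp(C_\pm H_\pm)$; (b) the constraints $\alpha_+=1/Z_-$ and $\alpha_-=1/Z_+$ enforced by $f(0)=g(0)=1$; and (c) natural boundary conditions at $A_\pm(\mu)$. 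Handling carefully the $0/0$ form arising from $\overline{\mu}_\pm(A_\pm)=0$, the natural BCs reduce, together with the previous constraints, to the single equation $(1-p)\exp(A_+/Z_-)=p\exp(A_-/Z_+)$, which in combination yields $\exp(A_+/Z_-)=1/(1-p)$ and $\exp(A_-/Z_+)=1/p$, hence $Z_-=A_+/|\log(1-p)|$ and $Z_+=A_-/|\log p|$. Consistency of $Z_+$ with the positive-side identity $Z_+=Z_-(\exp(A_+/Z_-)-1)$ then forces exactly the square root balance condition \eqref{srbc}. Since balance is assumed to fail, no interior critical point exists, so the infimum is not attained.

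For part (ii), when $\mu$ has a density, part (iii) gives $F(b_0;\mu)$ equal to the right-hand side of \eqref{notinf}, and the analysis in (i) shows $b_0$ is not a critical point of $F$ (because $Z_-(b_0)=A_-(1-p)/(p|\log p|)\neq A_+/|\log(1-p)|$ under non-balance). A perturbation of $b_0$ in the direction opposite the first variation of $F$ then strictly decreases $F$, yielding the strict inequality. For general $\mu$ without a density, I would approximate $\mu$ weakly by measures $\mu^{(n)}$ with piecewise continuous, locally bounded densities, chosen so that $A_\pm(\mu^{(n)})\to A_\pm(\mu)$, the corresponding square-root tail integrals converge, and balance fails for each $\mu^{(n)}$; the strict inequality passes to the limit by weak continuity of $\inf_b F(b;\cdot)$. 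The principal obstacle is part (i): handling the $0/0$ forms at $A_\pm(\mu)$ and ruling out critical points on the boundary of $\mathcal{D}_\mu$ (e.g., $\alpha_\pm\to 0$ or $\infty$) both require careful asymptotic analysis.
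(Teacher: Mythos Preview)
Your overall strategy coincides with the paper's: (iii) by direct substitution of $b_0$ into the hitting-time representation \eqref{bform}, (i) by showing the Euler--Lagrange system has no solution when balance fails, and (ii) by combining these.

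For part~(i), the paper does not redo the variational calculus; it simply invokes the analysis already carried out in the proof of Theorem~\ref{thm2}(ii), which established that $G_2$ has a critical point if and only if the square root balance condition holds (and $\mu$ has a suitable density). That analysis uses the distribution-function parametrization $F$ of \eqref{def-F} and perturbations $Q$ with compactly supported densities, which cleanly sidesteps your ``$0/0$ at $A_\pm(\mu)$'' and ``boundary of $\mathcal{D}_\mu$'' worries: for \emph{any} admissible $F$ one has $F+\epsilon Q\in\mathcal{D}(G_2)$ for small $|\epsilon|$, so a minimizer is automatically a critical point in this sense---no separate analysis of boundary behaviour is needed. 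Your $\Phi_\pm$ reformulation reaches the same conclusion, but what you call ``natural boundary conditions at $A_\pm(\mu)$'' corresponds in the paper not to an endpoint condition but to the requirement that \eqref{key1} itself (and not merely its derivative \eqref{key1-diff}) hold; checking this at $a=0$ is what forces $k_1/(k_1+k_2)=p$ and hence the balance condition via \eqref{kp}--\eqref{finalk}. The paper's parametrization thus avoids any delicate asymptotics near the support endpoints.

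The genuine gap is in your part~(ii) for general $\mu$. The assertion that ``the strict inequality passes to the limit by weak continuity of $\inf_b F(b;\cdot)$'' is unjustified: strict inequalities do not survive limits, and you have not established any such continuity of the infimum. The paper's argument is simply that (i) and (iii) together yield the strict inequality when $\mu$ has a piecewise continuous, locally bounded density: then $b_0\in\mathcal{D}_\mu$, part~(iii) gives $\int(E_0^{(b_0)}T_a)\,\mu(da)$ equal to the right side of \eqref{notinf}, and since by part~(i) the infimum is not attained, the infimum is strictly smaller than this value. (The paper's one-line proof, like yours, does not spell out a separate argument for $\mu$ lacking such a density.)
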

\noindent\bf Remark 1.\rm\ Note that the expression on the third line of \eqref{notinf} would be zero if the square root balance condition held, in which case the right hand side of \eqref{notinf} would be equal
to the right hand side of \eqref{infb}.
 The right hand side of \eqref{notinf} can also be written as
$$
\begin{aligned}
&(\frac{1-p}{|\log p|})(1-\frac{1-p}{|\log p|})\big(\int_{-\infty}^0\overline{\mu}_-^\frac12(x)dx\big)^2+\\
&(\frac{p}{|\log(1- p)|})(1-\frac{p}{|\log(1- p)|})\big(\int_0^\infty\overline{\mu}_+^\frac12(x)dx\big)^2+\\
&2\frac{p(1-p)}{|\log(1-p)||\log p|}(\int_{-\infty}^0\overline{\mu}_-^\frac12(x)dx)(\int_0^\infty\overline{\mu}_+^\frac12(x)dx).
\end{aligned}
$$
It is easy to check that the coefficients of $\big(\int_{-\infty}^0\overline{\mu}_-^\frac12(x)dx\big)^2$
and $\big(\int_0^\infty\overline{\mu}_+^\frac12(x)dx\big)^2$ in the above expression are positive.
\medskip

The above results suggest two open problems.

\noindent \bf Open Problem 1.\rm\ In the case that the square root balance condition fails, calculate
$\inf_{b\in \mathcal{D}_\mu}\int_{\mathbb{R}} (E^{(b)}_0T_a)\thinspace\mu(da)$.

\noindent \bf Open Problem 2.\rm\ Is $\inf_{b\in \mathcal{D}_\mu}\int_{\mathbb{R}} (E^{(b)}_0T_a)\thinspace\mu(da)$ necessarily infinite in the case
that one out of $\int_{-\infty}^0\overline{\mu}_-^\frac12(x)dx$ and  $\int_0^\infty\overline{\mu}_+^\frac12(x)dx$ is infinite and the other is finite?
If not, what can be said
about  $\inf_{b\in \mathcal{D}_\mu}\int_{\mathbb{R}} (E^{(b)}_0T_a)\thinspace\mu(da)$?
\medskip

It is natural to wonder about the corresponding problem in higher dimensions. Let the unknown stationary target $a\in \mathbb{R}^d$ be distributed according to a known distribution
$\mu\in \mathcal{P}(\mathbb{R}^d)$, the space of probability measures on $\mathbb{R}^d$.
Consider a diffusion process $X(\cdot)$ starting at 0 and  generated by $\frac D2\Delta +b(x)\cdot\nabla$, and denote probabilities and expectations with respect to this process by $P_0^{(b)}$ and $E_0^{(b)}$.
Let $\epsilon>0$ and define $\tau_{a;\epsilon}=\inf\{t\ge0:|X(t)-a|\le \epsilon\}$
One then wants to minimize $\int_{\mathbb{R}}(E_0^{(b)}\tau_{a;\epsilon})\mu(da)$ over a natural class of admissible drifts.  In the two-dimensional case,  resolve the drift into radial and angular components, $r$ and $\theta$,
and write $b(x)\cdot\nabla=b_{\thinspace\text{rad}}(r,\theta)\frac \partial {\partial r}+b_{\thinspace\text{ang}}(r,\theta)\frac1r\frac \partial{\partial\theta}$. It is intuitively clear that if we let $b_{\thinspace\text{rad}}(r,\theta)$ depend only on $r$ and let $b_{\thinspace\text{ang}}(r,\theta)$ be equal to a constant $b_{\thinspace\text{ang}}$, then for $|a|-\epsilon>0$,
the quantity $\lim_{\thinspace b_{\thinspace\text{ang}}\to\infty}E^{(b)}_0\tau_\epsilon$ will just be equal to the expected hitting time of $a-\epsilon$
 for the one-dimensional radial diffusion started from $0^+$ and generated by
 $\frac{d^2}{dr^2}+\frac1r\frac d{dr}+b_{\thinspace\text{rad}}(r)\frac d{dr}$.
And this latter hitting time converges to 0 as the drift $b_{\text{rad}}(r)$ converges pointwise to $\infty$.
Thus, in order to obtain something interesting, a   restriction must be placed on the angular  drift. Such a limitation doesn't seem to  occur in higher dimensions.
In any case, perhaps a good starting point would be to consider the class of radial drifts.
Our intuition is that the higher the dimension, the more strongly toward the origin
 will point an optimal or near-optimal radial drift, since the higher the dimension, the more space there is to search at each fixed radius.
 Of course, the great difficulty  with the multi-dimensional case is that there isn't an explicit formula for $E_0^{(b)}\tau_{a;\epsilon}$.
\medskip

We conclude this introductory section with a sketch
of our method of approach to the variational problem, $\inf_{b\in \mathcal{D}_\mu}\int_{\mathbb{R}} (E^{(b)}_0T_a)\thinspace\mu(da)$, since it  has a certain novelty to it.
To proceed, we need the following proposition.
\begin{proposition}\label{exphit}
Let $\mu\in\mathcal{P}(\mathbb{R})$,  and let $b\in\mathcal{D}_\mu$, where $\mathcal{D}_\mu$ is  the class of admissible drifts as in \eqref{admiss}. Then
\begin{equation}\label{exphitting}
E^{(b)}_0T_a=\begin{cases}
\frac2D\int_a^0dx\exp(-\int_0^x\frac2Db(y)dy)\int_x^{A_+(\mu)}dz\exp(\int_0^z\frac2Db(t)dt),\ A_-(\mu)\le a<0;\\
 \frac2D\int_0^adx\exp(-\int_0^x\frac2Db(y)dy)\int_{A_-(\mu)}^xdz\exp(\int_0^z\frac2Db(t)dt),\ 0<a\le A_+(\mu).
\end{cases}
\end{equation}
\end{proposition}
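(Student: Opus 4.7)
The plan is to set $u(x) := E_x^{(b)} T_a$ and characterize it as the unique solution to an ODE with boundary conditions, then solve that ODE by integrating factors. Write $p(x) = \exp(-\tfrac{2}{D}\int_0^x b(y)\,dy)$ and $q(x) = 1/p(x)$; note $\int_{A_-(\mu)}^{A_+(\mu)} q(x)\,dx < \infty$ by the admissibility condition \eqref{posrec}, while the formula's inner integrals $\int_{A_-(\mu)}^y q$ and $\int_y^{A_+(\mu)} q$ are automatically finite on this range.

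The first step is to invoke the standard fact from one-dimensional diffusion theory (see \cite{P}) that for $b \in \mathcal{D}_\mu$ the function $u$ satisfies $Lu = -1$ on $(A_-(\mu), A_+(\mu)) \setminus \{a\}$, with $u(a)=0$, and that the appropriate boundary condition at each endpoint $A_\pm(\mu)$ is reflecting $u'(A_\pm(\mu)) = 0$ when that endpoint is reachable (by the criterion in the text, this is the assertion baked into \eqref{driftcond} via $b=\pm\infty$ outside), while the endpoint is a natural/entrance boundary otherwise. The ODE $\tfrac{D}{2}u'' + b u' = -1$ rewrites as $(q u')' = -\tfrac{2q}{D}$. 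The second step is to integrate this relation starting from the boundary condition at the relevant endpoint: for the case $0 < a \le A_+(\mu)$, integrating from $A_-(\mu)$ to $x \in (A_-(\mu), a)$ and using $u'(A_-(\mu))=0$ gives
$$u'(x) = -\frac{2 p(x)}{D} \int_{A_-(\mu)}^{x} q(t)\,dt,$$
and then integrating from $x$ to $a$ against $u(a)=0$ yields
$$u(x) = \frac{2}{D} \int_{x}^{a} p(y) \int_{A_-(\mu)}^{y} q(t)\,dt\,dy.$$
Evaluating at $x=0$ recovers the second line of \eqref{exphitting}. The case $A_-(\mu)\le a<0$ is wholly symmetric, using the reflecting condition at $A_+(\mu)$ to get $u'(x) = \tfrac{2p(x)}{D}\int_x^{A_+(\mu)} q(t)\,dt$ and integrating from $a$ to $x=0$.

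The main subtlety — the step I expect to be the only nontrivial one — is justifying the boundary condition when $A_-(\mu) = -\infty$ or when $A_-(\mu)$ is finite but unreachable (i.e.\ $\int_{A_-(\mu)} p(x)\,dx = \infty$), and the analogous issue at $A_+(\mu)$. In the unreachable case there is no literal reflecting condition, so one must instead argue via a limit: approximate $b$ by $b_n$ that installs a genuine reflecting barrier at a reachable point $c_n \downarrow A_-(\mu)$ (e.g.\ by cutting off $b$ to a large positive constant below $c_n$), apply the already-proved formula on $(c_n, A_+(\mu))$, and then pass to the limit using monotone convergence on both the hitting-time side (since $T_a$ is monotone in the barrier location) and the explicit double-integral side (since the condition $\int q\,dx<\infty$ controls the inner integrand uniformly). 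This recovers the formula, with the natural boundary playing the same role as a reflecting one in the final expression. Finally, a direct verification that the right-hand side of \eqref{exphitting} satisfies $L u = -1$ and $u(a)=0$ — routine differentiation using $p'(x) = -\tfrac{2b(x)}{D}p(x)$ and $pq \equiv 1$ — gives an independent check that no information has been lost in the limiting argument.
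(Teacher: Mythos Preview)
Your approach is correct and follows the same template as the paper: characterize $u(x)=E_x^{(b)}T_a$ via $Lu=-1$ with the appropriate boundary datum, rewrite as $(qu')'=-\tfrac{2}{D}q$, and integrate. The one point of divergence is how you handle the unreachable (or infinite) endpoint. You propose to place a \emph{reflecting} barrier at an interior point $c_n\downarrow A_-(\mu)$ and invoke the already-established reachable-case formula on $(c_n,A_+(\mu))$, then pass to the limit. The paper instead works with $u_n(x)=E_x\big(T_a\wedge T_{c_n}\big)$, i.e.\ the \emph{Dirichlet} problem $u_n(c_n)=u_n(a)=0$, solves it explicitly, and lets $n\to\infty$.

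Both routes arrive at the same formula, but the paper's choice makes the probabilistic convergence more immediate: since $A_-(\mu)$ is unreachable, $T_{c_n}\to\infty$ a.s., hence $T_a\wedge T_{c_n}\uparrow T_a$ and monotone convergence applies directly. Your reflecting-barrier scheme also works, but the step ``$T_a$ is monotone in the barrier location'' and the identification of the limit require a pathwise coupling (the reflected process at $c_n$ dominates the original; and for $c_n$ below $\min_{t\le T_a}X_t$, which is $>A_-(\mu)$ a.s., the two processes coincide on $[0,T_a]$). One small imprecision: ``cutting off $b$ to a large positive constant below $c_n$'' does not literally install a reflecting barrier; cleaner is to impose reflection at $c_n$ directly (equivalently set $b=+\infty$ there), which is in any case what your subsequent application of the formula presumes.
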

\bf\noindent Remark.\rm\ The explicit formula for the hitting time in Proposition \ref{exphit} is of course not new, but since we need it for a variety of situations---including the case in which the drift can blow up at the boundary, and including
the case of reflection at the boundary, we will present its proof in section \ref{pfexphit}.

In light of Proposition \ref{exphit}, for $\mu=(1-p)\mu_-+p\mu_+$, we have
\begin{equation}\label{beforeRS}
\begin{aligned}
&\frac D2\int_{\mathbb{R}} (E^{(b)}_0T_a)\thinspace\mu(da)=\\
&(1-p)\int_{A_-(\mu)}^0\mu_-(da)\Big[\int_a^0dx\exp(-\int_0^x\frac2Db(y)dy)\int_x^{A_+(\mu)}dz\exp(\int_0^z\frac2Db(t)dt)\Big]+\\
&p\int_0^{A_+(\mu)}\mu_+(da)\Big[\int_0^adx\exp(-\int_0^x\frac2Db(y)dy)\int_{A_-(\mu)}^xdz\exp(\int_0^z\frac2Db(t)dt)\Big],
\end{aligned}
\end{equation}
and after a Reimann-Stieltjes integration by parts, we obtain
\begin{equation}\label{bform}
\begin{aligned}
&\frac D2\int_{\mathbb{R}} (E^{(b)}_0T_a)\thinspace\mu(da)=\\
&(1-p)\int_{A_-(\mu)}^0da\thinspace\overline{\mu}_-(a)\Big[\exp(-\int_0^a\frac2Db(y)dy)\int_a^{A_+(\mu)}dz\exp(\int_0^z\frac2Db(t)dt)\Big]+\\
&p\int_0^{A_+(\mu)}da\thinspace\overline{\mu}_+(a)\Big[\exp(-\int_0^a\frac2Db(y)dy)\int_{A_-(\mu)}^adz
\exp(\int_0^z\frac2Db(t)dt)\Big].
\end{aligned}
\end{equation}
In the case that $A_+(\mu)<\infty$ ($A_-(\mu)>-\infty$),  the passage from \eqref{beforeRS} to \eqref{bform} is true
even if  $\mu_+$  ($\mu_-$) has an atom at   $A_+(\mu)$ ($A_-(\mu)$), or if
$\int_0^{A_+(\mu)}dx\exp(-\int_0^x\frac2Db(y)dy)=\infty$  ($\int_{A_-(\mu)}^0dx\exp(-\int_0^x\frac2Db(y)dy)=\infty$).
This is because in \eqref{tails},  $\overline{\mu}_+(x)$ ($\overline{\mu}_-(x)$) has been defined not to include
$\mu_+(\{x\})$  ($\mu_-(\{x\}))$.
In the case that $A_+(\mu)=\infty$ ($A_-(\mu)=-\infty$), the passage from \eqref{beforeRS} to \eqref{bform} is true for the following reason. (We explain it for $A_+(\mu)=\infty$.)
We need to justify having  ignored in the integration by parts the possible contribution
\begin{equation}\label{possiblecontr}
\lim_{A\to\infty}\overline{\mu}_+(A)\int_0^Adx\exp(-\int_0^x\frac2Db(y)dy)\int_{A_-(\mu)}^xdz\exp(\int_0^z\frac2Db(t)dt).
\end{equation}
If the term
$\int_0^\infty da\thinspace\overline{\mu}_+(a)\Big[\exp(-\int_0^a\frac2Db(y)dy)\int_{A_-(\mu)}^adz
\exp(\int_0^z\frac2Db(t)dt)\Big]$  on the right hand side of \eqref{bform} is infinite, then nothing need be checked; thus,
 assume this  integral is finite.
Then we need to show that  \eqref{possiblecontr} is equal to 0. Since $\lim_{A\to\infty}\overline{\mu}_+(A)=0$,
 \eqref{possiblecontr} is equal to
 \begin{equation*}\label{possiblecontragain}
\lim_{A\to\infty}\overline{\mu}_+(A)\int_{A_0}^Adx\exp(-\int_0^x\frac2Db(y)dy)\int_{A_-(\mu)}^xdz\exp(\int_0^z\frac2Db(t)dt),
\end{equation*}
 for
any fixed $A_0>0$.
We have
\begin{equation*}\label{deltaA0A}
\begin{aligned}
&\overline{\mu}_+(A)\int_{A_0}^Adx\exp(-\int_0^x\frac2Db(y)dy)\int_{A_-(\mu)}^xdz\exp(\int_0^z\frac2Db(t)dt)
\le\\
&\int_{A_0}^Ada\thinspace\overline{\mu}_+(a)\Big[\exp(-\int_0^a\frac2Db(y)dy)\int_{A_-(\mu)}^adz
\exp(\int_0^z\frac2Db(t)dt)\Big]:=\delta(A_0,A).
\end{aligned}
\end{equation*}
By the integrability assumption, $\lim_{A_0\to\infty}\lim_{A\to\infty}\delta(A_0,A)=0$.
We conclude from the above argument that \eqref{possiblecontr} is indeed equal to 0.

We want to minimize the righthand side of \eqref{bform} over $b\in\mathcal{D}_\mu$.
There are two points of view that one can take, and it  turns out that  both of them are essential.
One point of view is to consider the righthand side of \eqref{bform} as a functional of $b$; we will call it $G_1$:
\begin{equation}\label{G1}
\begin{aligned}
&G_1(b)=(1-p)\int_{A_-(\mu)}^0da\thinspace\overline{\mu}_-(a)\Big[\exp(-\int_0^a\frac2Db(y)dy)\int_a^{A_+(\mu)}dz\exp(\int_0^z\frac2Db(t)dt)\Big]+\\
&p\int_0^{A_+(\mu)}da\thinspace\overline{\mu}_+(a)\Big[\exp(-\int_0^a\frac2Db(y)dy)\int_{A_-(\mu)}^adz\exp(\int_0^z\frac2Db(t)dt)\Big].
\end{aligned}
\end{equation}
For the other point of view, define the distribution function
\begin{equation}\label{def-F}
F(x)=\frac{\int_{A_-(\mu)}^xdz\exp(\int_0^z\frac2Db(t)dt)}{\int_{A_-(\mu)}^{A_+(\mu)}dz\exp(\int_0^z\frac2Db(t)dt)},
\end{equation}
and let $f(x)=F'(x)$ denote its density.
Then the righthand side of \eqref{bform} can be thought of as a functional of $F$; we call it $G_2(F)$. It is given by
\begin{equation}\label{G2}
G_2(F)=(1-p)\int_{A_-(\mu)}^0\overline{\mu}_-(a)\frac{F(A_+(\mu))-F(a)}{f(a)}da
+p\int_0^{A_+(\mu)}\overline{\mu}_+(a)\frac{F(a)}{f(a)}da.
\end{equation}
Of course, $F(A_+(\mu))=1$, but it is useful to write it as we have done in order to exploit the homogeneity.
Indeed, note that now we can consider $G_2$ to be a functional of positive multiples of distribution functions of the type just described, and we have
$G_2(cF)=G_2(F)$, for all $c>0$.
We denote the domain of the functional $G_2$ by $\mathcal{D}(G_2)$ and specify it as follows:
\begin{equation}\label{domainG2}
\begin{aligned}
&\mathcal{D}(G_2)\ \text{is the set of positive multiples of  the class of distributions functions}\\
& F\ \text{that can be written in the form
\eqref{def-F}, where}\ b\in\mathcal{D}_\mu.
\end{aligned}
\end{equation}

To search for critical points, the first point of view requires us to consider the condition
\begin{equation}\label{badvariation}
0=\lim_{\epsilon\to0}\frac{G_1(b+\epsilon\beta)-G_1(b)}\epsilon,
\end{equation}
for an appropriate wide class of drifts $\beta$.
To isolate $\beta$ in \eqref{badvariation} requires numerous integration by parts. This eventually leads  to an equation of the form $(1-p)\int_{A_-(\mu)}^0\beta(a)\Psi_-(a)da+
 p\int_0^{A_+(\mu)}\beta(a)\Psi_+(a)da=0$, for all $\beta$, where $\Psi_-$ and $\Psi_+$ are expressions involving  $b$.
 Thus   $\Psi_-(a)=0$, for $A_-(\mu)<a<0$, and $\Psi_+(a)\equiv0$, for $0< a<A_+(\mu)$.
 However, we did not find it tractable to solve these equations for $b$.

Since $G_2$ is homogeneous of order zero,
to search for critical points via the second point of view we  consider the condition
\begin{equation}\label{goodvariation}
0=\lim_{\epsilon\to0}\frac{G_2(F+\epsilon Q)-G_2(F)}\epsilon
\end{equation}
(here $\epsilon$ takes on both positive and negative values), where $Q$ is such that $F+\epsilon Q$ belongs to the domain $\mathcal{D}(G_2)$ of $G_2$. In fact, in order to ensure that we can interchange the order of the integration and the differentiation when we   calculate
\eqref{goodvariation} with $G_2$ given by \eqref{G2},
 and also in order to ensure
that $F+\epsilon Q$ is positive for small negative $\epsilon$,
we will actually restrict ourselves to distribution functions  $Q$ with densities compactly   supported   in $\big(A_-(\mu),A_+(\mu)\big)$.
After integrating by parts several times to isolate the density  $q:=Q'$  of $Q$, we obtain an equation of the form
$$
(1-p)\int_{A_-(\mu)}^0q(a)\Phi_-(a)da+ p\int_0^{A_+(\mu)}q(a)\Phi_+(a)da=\Sigma(F,\overline{\mu}_-,\overline{\mu}_+),
$$
where $\Phi_-$ is an expression involving $F,F'$ and  $\overline{\mu}_-$,  $\Phi_+$ is an expression
involving $F,F'$ and $\overline{\mu}_+$, and
$\Sigma$ is a constant involving $F'$,  $\overline{\mu}_-$ and $\overline{\mu}_+$. Since $q$ is a general compactly supported  density function, this leads to the equations
$(1-p)\Phi_-(a)=\Sigma(F,\overline{\mu}_-,\overline{\mu}_+)$, for $A_-(\mu)<a<0$ and $p\Phi_+(a)=\Sigma(F,\overline{\mu}_-,\overline{\mu}_+)$, for $0<a<A_+(\mu)$.
These equations for $F$ turn out to be tractable.
If $\mu$ satisfies the square root  balance condition and is as in (ii) of Theorem \ref{thm2},
then there is a unique solution $F_0$ for which the corresponding $b_0$
(obtained via $\frac{F_0''(x)}{F_0'(x)}=\frac{f_0'}{f_0}(x)=\frac2D b_0(x)$) is in $\mathcal{D}_\mu$; otherwise there
is no solution, and thus there are no critical points.

When $G_2$ possesses a critical point $F_0$, how do we show that in fact $G_2$ attains its   global minimum uniquely    at $F_0$?
(Or equivalently, how do we show  that the global minimum of $G_1$ is attained uniquely at $b_0$, where $b_0$ corresponds to $F_0$ via \eqref{def-F}?)
Uniqueness is immediate since there is only one critical point.
Due to certain technical obstacles, we can only  show  directly
that $F_0$ is the global minimum
 in the case of  measures
$\mu$ for which $A_-(\mu)$ and $A_+(\mu)$ are finite and are
atoms of the measure.
The  case of a general measure  is obtained by approximating by measures as above.
 It is natural to take an arbitrary admissible $F$ and
consider  $L_2(t):=G_2((1-t)F_0+tF)$. We would like to show that  $L_2$ is  convex and that $L_2'(0)=0$, from which it would follow that the global minimum is attained at $F_0$.
 However,
we see no way to prove that $L_2$ is convex.
On the other hand, if we consider $L_1(t):=G_1((1-t)b+t\beta)$, for arbitrary $\beta$ and  \it arbitrary\rm\ $b$, not just for the corresponding critical case $b=b_0$, then it is very simple to show that $L_1$ is convex.
\begin{proposition}\label{convex}
The set $\mathcal{D}_\mu$ is convex.
Let $b,\beta\in \mathcal{D}_\mu$, and let
 $L_1(t)=G_1((1-t)b+t\beta)$,\ $0\le t\le 1$, where $G_1$ is as in \eqref{G1}. Then $L_1$ is convex.
\end{proposition}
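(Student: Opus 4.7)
The plan is to handle the two claims separately. For the convexity of $\mathcal{D}_\mu$, the qualitative conditions in \eqref{driftcond}---piecewise continuity, local boundedness on $(A_-(\mu),A_+(\mu))$, the $\pm\infty$ boundary values, and local boundedness at an atomic endpoint---are trivially inherited by a convex combination $(1-t)b+t\beta$. The only substantive point is the positive recurrence condition \eqref{posrec}. Writing $B_b(x):=\int_0^x \tfrac{2}{D} b(y)\,dy$, we have $B_{(1-t)b+t\beta}=(1-t)B_b+tB_\beta$, so H\"older's inequality with conjugate exponents $1/(1-t)$ and $1/t$ gives
$$\int_{A_-(\mu)}^{A_+(\mu)} e^{B_{(1-t)b+t\beta}(x)}\,dx\le\Bigl(\int_{A_-(\mu)}^{A_+(\mu)} e^{B_b(x)}\,dx\Bigr)^{1-t}\Bigl(\int_{A_-(\mu)}^{A_+(\mu)} e^{B_\beta(x)}\,dx\Bigr)^{t}<\infty,$$
establishing $(1-t)b+t\beta\in\mathcal{D}_\mu$ for every $t\in[0,1]$.

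The key for the convexity of $L_1$ is to rewrite $G_1$ so that $b$ enters only through a linear functional inside an exponential. Absorbing the factor $e^{-B_b(a)}$ into the inner integral in \eqref{G1} yields
$$G_1(b)=(1-p)\!\int_{A_-(\mu)}^0\!\!\overline{\mu}_-(a)\!\int_a^{A_+(\mu)}\!\! e^{B_b(z)-B_b(a)}\,dz\,da+p\!\int_0^{A_+(\mu)}\!\!\overline{\mu}_+(a)\!\int_{A_-(\mu)}^{a}\!\! e^{B_b(z)-B_b(a)}\,dz\,da,$$
i.e., an integral of $\exp\bigl(B_b(z)-B_b(a)\bigr)$ against a non-negative measure on the $(a,z)$-plane.

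For $b_t:=(1-t)b+t\beta$, the exponent satisfies
$$B_{b_t}(z)-B_{b_t}(a)=(1-t)\bigl[B_b(z)-B_b(a)\bigr]+t\bigl[B_\beta(z)-B_\beta(a)\bigr],$$
which is affine in $t$. Composing with the convex function $\exp$, the integrand is, for every fixed pair $(a,z)$, a convex function of $t\in[0,1]$; and since $\overline{\mu}_-,\overline{\mu}_+\ge 0$, integrating against the positive product measures preserves convexity term by term. Hence $L_1(t)=G_1(b_t)$ is convex, interpreted in the extended real sense when necessary (and automatically finite on $[0,1]$ whenever $L_1(0)$ and $L_1(1)$ are, via the pointwise bound $e^{B_{b_t}(z)-B_{b_t}(a)}\le(1-t)e^{B_b(z)-B_b(a)}+t\,e^{B_\beta(z)-B_\beta(a)}$). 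There is no real obstacle here once the double-integral representation has been written down; the convexity is a direct consequence of the convexity of $\exp$ applied to an affine-in-$t$ exponent, which is precisely what contrasts this setting with $G_2$, where $F$ enters nonlinearly through the ratio $F/f$.
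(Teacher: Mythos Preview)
Your proof is correct and follows essentially the same approach as the paper: H\"older's inequality with exponents $1/(1-t)$ and $1/t$ for the convexity of $\mathcal{D}_\mu$, and rewriting $G_1$ so that $b$ appears only through $\int_a^z \tfrac{2}{D}b(s)\,ds$ in an exponential, then invoking the convexity of $\exp$ on an affine-in-$t$ exponent and integrating against the non-negative weights $\overline{\mu}_\pm$. The paper carries this out on one summand of $G_1$ and notes the other is identical, while you treat both at once; otherwise the arguments coincide.
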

Our proof of the above result does not require that the measure be of the special type mentioned above.
However, we require this  restriction to  prove  the following technical result.
\begin{proposition}\label{G1G2}
Assume that $A_-(\mu)$ and $A_+(\mu)$ are finite,  that   $\mu$ has atoms at both $A_-(\mu)$ and $A_+(\mu)$, and that its
  restriction to $(A_-(\mu),A_+(\mu))$   is absolutely continuous with a
  piecewise continuous, locally bounded density. Let
  $b_0$ be as in \eqref{bestb}.
Let $b\in\mathcal{D}_\mu$, and define $L_1(t)=G_1((1-t)b_0+tb),\ 0\le t\le 1$, where $G_1$ is as in \eqref{G1}.
Then $L_1'(0)=0$.
\end{proposition}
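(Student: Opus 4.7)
The plan is to pass from $G_1$ to $G_2$ via the identity $G_1(c)=G_2(F[c])$, which is immediate from comparing \eqref{bform} with \eqref{G2}, and then exploit the fact, established in the sketch preceding the proposition, that $b_0$ corresponds via \eqref{def-F} to an $F_0\in\mathcal{D}(G_2)$ satisfying the Euler--Lagrange equations for $G_2$. Because $\mu$ has atoms at both $A_-(\mu)$ and $A_+(\mu)$, the discussion following \eqref{driftcond} guarantees that every admissible drift is bounded on $[A_-(\mu),A_+(\mu)]$; in particular $\beta:=b-b_0$ is bounded, the exponentials in \eqref{G1} for the drift $b_0+t\beta$ are uniformly bounded in $t\in[0,1]$, and one may differentiate under the integral sign at $t=0$ by dominated convergence.

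Writing $L_1(t)=G_2(F_t)$ with $F_t:=F[b_0+t\beta]$ and invoking the degree-zero homogeneity of $G_2$, the next step is to replace $F_t$ by the un-normalized version $\tilde F_t(x)=\int_{A_-(\mu)}^x\exp\bigl(\tfrac{2}{D}\int_0^z(b_0+t\beta)(y)\,dy\bigr)\,dz$. A routine differentiation then gives $L_1'(0)=\delta G_2(\tilde F_0)[\dot V]$, where, setting $\hat B(z):=\int_0^z\beta(y)\,dy$ and $\tilde f_0(z):=\exp(\tfrac{2}{D}\int_0^z b_0)$,
\begin{equation*}
\dot V(x)=\int_{A_-(\mu)}^x\tilde f_0(z)\,\tfrac{2}{D}\hat B(z)\,dz,\qquad v(x):=\dot V'(x)=\tilde f_0(x)\,\tfrac{2}{D}\hat B(x).
\end{equation*}
The density $v$ is bounded on $[A_-(\mu),A_+(\mu)]$, but need not vanish at either endpoint.

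Next, I would use Euler's identity for the homogeneous functional $G_2$, namely $\delta G_2(\tilde F_0)[c\tilde F_0]=0$, to replace $\dot V$ by $W:=\dot V-c\tilde F_0$, choosing $c$ so that $w:=v-c\tilde f_0$ satisfies $\int_{A_-(\mu)}^{A_+(\mu)}w\,dz=0$; equivalently so that $W(A_-(\mu))=W(A_+(\mu))=0$. Carrying out the same sequence of integrations by parts used in the $G_2$ critical-point derivation sketched in the introduction, one obtains
\begin{equation*}
L_1'(0)=\delta G_2(\tilde F_0)[W]=(1-p)\!\int_{A_-(\mu)}^0\! w(a)\Phi_-(a)\,da+p\!\int_0^{A_+(\mu)}\! w(a)\Phi_+(a)\,da,
\end{equation*}
provided every boundary term accumulated during the integration by parts vanishes. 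The Euler--Lagrange equations $(1-p)\Phi_-\equiv\Sigma$ on $(A_-(\mu),0)$ and $p\Phi_+\equiv\Sigma$ on $(0,A_+(\mu))$, satisfied by $F_0=F[b_0]$, then collapse this right-hand side to $\Sigma\int_{A_-(\mu)}^{A_+(\mu)}w\,dz=0$ by the mean-zero choice of $c$.

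The main obstacle is the boundary-term proviso just stated, and this is the sole place in the argument where the twin-atom hypothesis on $\mu$ is used. When $\mu$ has atoms at both $A_\pm(\mu)$, the function $\overline{\mu}_\pm$ has a strictly positive finite limit as one approaches the corresponding endpoint from the interior, and the remaining factors ($\tilde f_0$, $1/\tilde f_0$, $\tilde F_0$ and $\Phi_\pm$) are uniformly bounded on $[A_-(\mu),A_+(\mu)]$. Consequently every boundary contribution reduces to a bounded multiple of $W(A_-(\mu))$ or $W(A_+(\mu))$, which vanish by construction. Without both atoms one of these factors can blow up or degenerate at the endpoint, so the vanishing of $W$ alone would no longer control the corresponding boundary term---this is precisely the restriction recorded in the hypothesis of the proposition.
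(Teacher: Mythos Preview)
Your proof is correct and follows essentially the same route as the paper's: pass to $G_2$ via $G_1(c)=G_2(F[c])$, compute the first variation at $\tilde F_0$, integrate by parts exactly as in \eqref{var-1}--\eqref{var-2}, and use the Euler--Lagrange equations \eqref{key1} for $F_0$ to collapse the result to zero, with the twin-atom hypothesis guaranteeing that $b_0$, $\beta$, $\tilde f_0^{\pm1}$ and the $\Phi_\pm$ factors are all bounded so the boundary terms are controlled. The only difference is cosmetic: the paper splits into the cases $\hat Q(A_+(\mu))\ne0$ (normalize by dividing) and $\hat Q(A_+(\mu))=0$ (use \eqref{key1} directly), whereas you invoke Euler's identity for the degree-zero homogeneous $G_2$ to subtract $c\tilde F_0$ and handle both cases uniformly.
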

From the above two propositions, it follows easily that when $\mu$ is as in Proposition \ref{G1G2},  the critical point $F_0$ is in fact the global minimum.

The rest of the paper is organized as follows. In section \ref{examples}, we illustrate Theorem \ref{thm2} with a number of examples.
The proof of Theorem \ref{thm1} requires the result in Theorem
\ref{thm2}-i, and  the proof of Theorem \ref{thm3} requires some of the proof of Theorem \ref{thm2}.
Thus we first prove Theorem \ref{thm2} in section \ref{pfthm2}, and then prove Theorems \ref{thm1} and  \ref{thm3} in
section \ref{pfthm13}.
Of course, these result also depend on Propositions \ref{exphit}, \ref{convex} and \ref{G1G2}.
The first of these propositions is proved in section \ref{pfexphit} and the next two are proved in section \ref{pfprop34}.

\section{Some examples of Theorem \ref{thm2}}\label{examples}
We give several examples to illustrate Theorem \ref{thm2}, restricting always to the case that the target distribution $\mu$ is symmetric.
Recall that in the symmetric case, the infimum is given by \eqref{symmetriccaseinf}.
Recall also from Remark 2 after Theorem \ref{thm2} that in the symmetric case, the expected distance of the target is equal to $\int_0^\infty\overline{\mu}_+(dx)$, and has been denoted
by AvgDist$(\mu)$.
Furthermore, by \eqref{infavgdist}, the ratio
$\frac{D\log2}2\thinspace\frac{\inf_{b\in \mathcal{D}_\mu}\int_{\mathbb{R}} (E^{(b)}_0T_a)\thinspace\mu(da)}
{(\text{AvgDist}(\mu))^2}$  is always greater or equal to 1, with equality only in the case of the distributions
in example I below.
\medskip

\noindent \bf I.\it\ Symmetric Degenerate Distribution\rm: $\mu=\frac12\delta_{-A}+\frac12\delta_A,\ A>0$

We have $\overline{\mu}_+(x)=1,\ x\in[0,A)$, and   $\overline{\mu}_+(x)=0, \ x\ge A.$
Thus,
$$
\inf_{b\in \mathcal{D}_\mu}\int_{\mathbb{R}} (E^{(b)}_0T_a)\thinspace\mu(da)=\frac2{D\log 2}A^2=\frac2{D\log 2}(\text{AvgDist}(\mu))^2.
$$
The infimum is attained at the anti-symmetric drift $b_0$ satisfying
$$
b_0(x)=\begin{cases}\frac{D\log 2}{2A},\ 0<x<A;\\ -\infty,\ x>A.\end{cases}
$$
Of course, the corresponding diffusion can reach $\pm A$.

\medskip

\noindent \bf II.\it\ Symmetric Uniform Distribution\rm: $\mu=\text{U}([-A,A]), \  A>0$

We have $\overline{\mu}_+(x)=1-\frac xA$, \ $x\in[0,A)$, and $\overline{\mu}_+(x)=0,\ x>A$.
One has $\int_0^\infty\overline{\mu}_+^\frac12(x)dx=\frac1{A^\frac12}\int_0^A(A-x)^\frac12dx=\frac23 A.$
Also, $\text{AvgDist}(\mu)=\frac A2$.
Thus,
$$
\inf_{b\in \mathcal{D}_\mu}\int_{\mathbb{R}} (E^{(b)}_0T_a)\thinspace\mu(da)=\frac{8}{9D\log 2}A^2=\frac{16}9\frac2{D\log 2}(\text{AvgDist}(\mu))^2.
$$
\medskip
The infimum is attained at the anti-symmetric drift $b_0$ satisfying
$$
b_0(x)=\begin{cases}D\big[-\frac1{4(A-x)}+\frac{3\log 2}{4A}(1-\frac xA)^\frac12\big],\ x\in(0,A);\\ -\infty,\ x>A.\end{cases}
$$
Despite the unbounded drift, the corresponding diffusion can reach $\pm A$.
\medskip

\noindent\bf III.\it\ Symmetric Exponential Distribution\rm: $\mu=\frac12\text{Exp}(\lambda)+\frac12\big(-\text{Exp}(\lambda)\big),\ \lambda>0$

We have  $\overline{\mu}_+(x)=e^{-\lambda}$,\ $x>0$, and $\text{AvgDist}(\mu)=\frac1\lambda$. Thus,
$$
\inf_{b\in \mathcal{D}_\mu}\int_{\mathbb{R}} (E^{(b)}_0T_a)\thinspace\mu(da)=\frac8{D\lambda^2\log 2}=4\frac2{D\log 2}(\text{AvgDist}(\mu))^2.
$$
The infimum is attained at the anti-symmetric drift $b_0$ satisfying
$$
b_0(x)=D\big(-\frac\lambda4+\frac\lambda4(\log2) e^{-\frac\lambda2 x}\big),\ x>0.
$$
\medskip

\bf\noindent IV.\it\ Symmetric Gaussian Distribution\rm: $\mu=N(0,\sigma^2)$

We have $\overline{\mu}_+(x)=\int_x^\infty\frac{\exp(-\frac{y^2}{2\sigma^2})}{\sqrt{2\pi}\sigma}dy=1-\Phi(\frac x\sigma$),
where $\Phi(z)=\int_{-\infty}^z\frac{\exp(-\frac{y^2}{2})}{\sqrt{2\pi}}dy$.
One has  $\int_0^\infty\overline{\mu}_+^\frac12(x)dx=\sigma\int_0^\infty\big(1-\Phi(z)\big)^\frac12dz\approx0.9219\thinspace\sigma$.
Also, $\text{AvgDist}(\mu)=\frac\sigma{\sqrt{2\pi}}$.
Thus,
$$
\begin{aligned}
&\inf_{b\in \mathcal{D}_\mu}\int_{\mathbb{R}} (E^{(b)}_0T_a)\thinspace\mu(da)=\frac{2\sigma^2\big(\int_0^\infty\big(1-\Phi(z)\big)^\frac12dz\big)^2}{D\log 2}=\\
&2\pi\big(\int_0^\infty\big(1-\Phi(z)\big)^\frac12dz\big) ^2\frac{2}{D\log 2}(\text{AvgDist}(\mu))^2\approx
5.340\frac{2}{D\log 2}(\text{AvgDist}(\mu))^2.
\end{aligned}
$$
The infimum is attained at the anti-symmetric drift $b_0$ satisfying
$$
b_0(x)=D\big[-\frac14\frac{e^{-\frac{x^2}{2\sigma^2}}}{\sqrt{2\pi}\sigma(1-\Phi(\frac x\sigma))}+\frac{\log2}{2\sigma\int_0^\infty\big(1-\Phi(z)\big)^\frac12dz}\big(1-\Phi(\frac x\sigma)\big)^\frac12\big],\ x>0.
$$
\medskip

\noindent \bf V.\it\ Symmetric Pareto Distribution\rm: $\mu=\frac12 \text{Pareto}(\alpha,A_0)+\frac12(-\text{Pareto}(\alpha,A_0))$, where $A_0>0$, $\alpha>2$ and
$\mu_+\sim\text{Pareto}(\alpha,A_0)$ is given by $\overline\mu_+(x)=\min(1,(\frac x{A_0})^{-\alpha})$, \ $x>0$.

One has  $\int_0^\infty\overline{\mu}_+^\frac12(x)dx=A_0+\frac2{\alpha-2}$
 and $\text{AvgDist}(\mu)=A_0+\frac1{\alpha-1}$. Thus
$$
\inf_{b\in \mathcal{D}_\mu}\int_{\mathbb{R}} (E^{(b)}_0T_a)\thinspace\mu(da)=
\big(\frac{\alpha-1}{\alpha-2}\big)^2\Big(\frac{A_0(\alpha-2)+2}{A_0(\alpha-1)+1}\Big)^2\frac2{D\log 2}(\text{AvgDist}(\mu))^2.
$$
The infimum is attained at the anti-symmetric drift $b_0$ satisfying
$$
b_0(x)=\begin{cases} D\frac{\log2}{2(A_0+\frac2{\alpha-2})}, \ x\in(0,A_0);\\
D\big(-\frac \alpha{4x}+\frac{\log 2}{2(A_0+\frac2{\alpha-2})}(\frac x{A_0})^{-\frac \alpha2}\big),\ x>A_0.
 \end{cases}
$$
Note that this drift is only piecewise continuous, because $\overline{\mu}_+$ is only piecewise continuously differentiable.

\medskip

\bf \noindent Remark 1.\rm\ Note that the ratio
$\frac{D\log2}2\thinspace\frac{\inf_{b\in \mathcal{D}_\mu}\int_{\mathbb{R}} (E^{(b)}_0T_a)\thinspace\mu(da)}
{(\text{AvgDist}(\mu))^2}$ is independent of the parameter for each of the families of distributions in examples I-IV above.
For the family of Pareto distributions in example V, for fixed $A_0$, this ratio
 increases from $1^+$ to $\infty$
as $\alpha$ decreases from  $\infty$ to $2^+$.

\bf\noindent Remark 2.\rm\ Note the asymptotic behavior as $x\to\infty$ of the minimizing drift $b_0(x)$  in examples III--V:

\noindent Exponential: $\lim_{x\to\infty}b(x)=-\frac{\lambda}4D$;

\noindent Gaussian: $b(x)\sim-\frac{x}{4\sigma^2}D$;

\noindent Pareto: $b(x)\sim-\frac{\alpha}{4x}D$.

\section{Proof of Theorem \ref{thm2}}\label{pfthm2}
 We begin with the long proof of part (ii). The proofs of the other two  parts use the result of part (ii).

\noindent \it Proof of part (ii).\rm\ Recalling \eqref{bform}-\eqref{G2} and recalling the
definition of  $\mathcal{D}(G_2)$ from \eqref{domainG2},
we search for critical points $F\in\mathcal{D}(G_2)$
of the functional $G_2(F)$.
Let   $Q$ denote an arbitrary distribution function on $(A_-(\mu),A_+(\mu))$, with a density $q$ that is
continuous, piecewise continuously differentiable and compactly supported in $(A_-(\mu),A_+(\mu))$.
Then $F+\epsilon Q$ belongs to the domain $\mathcal{D}(G_2)$ for all $\epsilon$ with sufficiently small absolute value.
To prove this, one needs to find  a $b_\epsilon\in\mathcal{D}_\mu$, the class of admissible drifts, such that
$$
\frac{F(a)+\epsilon Q(a)}{1+\epsilon}=
\frac{\int_{A_-(\mu)}^xdz\exp(\int_0^z\frac2Db_\epsilon(t)dt)}
{\int_{A_-(\mu)}^{A_+(\mu)}dz\exp(\int_0^z\frac2Db_\epsilon(t)dt)}.
$$
This can be solved directly for $b_\epsilon$ by differentiating, taking logarithms and then differentiating again.
(The conditions above on $q$ are dictated by the conditions on $b_\epsilon\in \mathcal{D}_\mu$.)

We call $F$ a critical point if
\eqref{goodvariation} holds for all such $Q$.
A necessary condition for
$\inf_{b\in \mathcal{D}_\mu}\int_{\mathbb{R}} (E^{(b)}_0T_a)\thinspace\mu(da)$ to be attained at some particular $b$ is that the corresponding $F$ (via \eqref{def-F}) is critical for $G_2$.
Indeed, if $F$ is not critical, then for some $\epsilon$ with small absolute value, we will have
$G_2(F+\epsilon Q)<G_2(F)$, or equivalently,
$\int_{\mathbb{R}}(E^{(b)}_0T_a)\thinspace\mu(da)=G_1(b)>G_1(b_\epsilon)=\int_{\mathbb{R}}(E^{(b_\epsilon)}_0T_a)\thinspace\mu(da)$.

Now $F$ will be critical, that is,  \eqref{goodvariation} will hold for all such $Q$,  if and only if
\begin{equation}\label{var-1}
\begin{aligned}
&(1-p)\int_{A_-(\mu)}^0\overline{\mu}_-(a)\Big(\frac{1-Q(a)}{f(a)}-\frac{(1-F(a))q(a)}{f^2(a)}\Big)da+\\
&p\int_0^{A_+(\mu)}\overline{\mu}_+(a)\Big(\frac{Q(a)}{f(a)}-\frac{F(a)q(a)}{f^2(a)}\Big)da=0,
\end{aligned}
\end{equation}
for all such $Q$.
Integration by parts gives
\begin{equation}\label{IBP-1}
\int_{A_-(\mu)}^0\overline{\mu}_-(a)\frac{Q(a)}{f(a)}da=\int_{A_-(\mu)}^0q(a)\big(\int_a^0\frac{\overline{\mu}_-(x)}{f(x)}dx\big)da
\end{equation}
and
\begin{equation}\label{IBP-2}
\int_0^{A_+(\mu)}\overline{\mu}_+(a)\frac{Q(a)}{f(a)}da=-\int_0^{A_+(\mu)}q(a)\big(\int_0^a\frac{\overline{\mu}_+(x)}{f(x)}dx\big)da
+\int_0^{A_+(\mu)}\frac{\overline{\mu}_+(a)}{f(a)}da.
\end{equation}
Substituting \eqref{IBP-1} and \eqref{IBP-2} into \eqref{var-1} gives
\begin{equation}\label{var-2}
\begin{aligned}
&(1-p)\int_{A_-(\mu)}^0q(a)\Big[-\int_a^0\frac{\overline{\mu}_-(x)}{f(x)}dx-\frac{\overline{\mu}_-(a)(1-F(a))}{f^2(a)}\Big]da+\\
&p\int_0^{A_+(\mu)}q(a)\Big[-\int_0^a\frac{\overline{\mu}_+(x)}{f(x)}dx-\frac{\overline{\mu}_+(a)F(a)}{f^2(a)}\Big]+\\
&(1-p)\int_{A_-(\mu)}^0\frac{\overline{\mu}_-(a)}{f(a)}da+p\int_0^{A_+(\mu)}\frac{\overline{\mu}_+(a)}{f(a)}da=0.
\end{aligned}
\end{equation}
Now \eqref{var-2} will hold for all densities $q$ of the type described above if and only if
\begin{equation}\label{key1}
\begin{aligned}
&\int_a^0\frac{\overline{\mu}_-(x)}{f(x)}dx+\frac{\overline{\mu}_-(a)(1-F(a))}{f^2(a)}=
\int_{A_-(\mu)}^0\frac{\overline{\mu}_-(x)}{f(x)}dx+\frac p{1-p}\int_0^{A_+(\mu)}\frac{\overline{\mu}_+(x)}{f(x)}dx,\\ &a\in(A_-(\mu),0);\\
&\int_0^a\frac{\overline{\mu}_+(x)}{f(x)}dx+\frac{\overline{\mu}_+(a)F(a)}{f^2(a)}=
\frac{1-p}p\int_{A_-(\mu)}^0\frac{\overline{\mu}_-(x)}{f(x)}dx+\int_0^{A_+(\mu)}\frac{\overline{\mu}_+(x)}{f(x)}dx,\\ &a\in(0,A_+(\mu)).
\end{aligned}
\end{equation}
Denoting by $C$ the constant on the right hand side of the first equation above, we have
\begin{equation}\label{regularity}
\overline{\mu}_-(a)=\frac{f^2(a)}{1-F(a)}\big(C-\int_a^0\frac{\overline{\mu}_-(x)}{f(x)}dx\big),\ a\in(A_-(\mu),0).
\end{equation}

From \eqref{def-F} and the fact that $b\in \mathcal{D}_\mu$, it follows that $F$ is  continuously differentiable on
$(A_-(\mu),0)$, and that $f$ is continuous and piecewise continuously differentiable with a locally bounded
derivative on $(A_-(\mu),0)$.
Thus, we deduce from \eqref{regularity} that $\overline{\mu}_-$ is continuous and piecewise continuously
 differentiable with locally bounded derivative on $(A_-(\mu),0)$.
The same analysis shows that $\overline{\mu}_+$ is continuous and piecewise continuously differentiable with locally
bounded derivative on $(0,A_+(\mu))$.
We have thus shown that a necessary condition for the existence of a critical point is that
the restriction of $\mu$ to $(A_-(\mu),A_+(\mu))$ is absolutely continuous  with a  density that
is piecewise continuous  and locally bounded.
 In addition, $\mu$ might possibly possess an atom at $A_-(\mu)$ and/or at
$A_+(\mu)$.

We now continue our analysis under the assumption that $\mu$ satisfies the above noted necessary condition for a critical
point.
Then $\overline{\mu}_-$ and $\overline{\mu}_+$ are continuous and piecewise continuously differentiable with a locally
bounded derivative. In the analysis that follows, we implicitly assume that
$\overline{\mu}_-$ and $\overline{\mu}_+$ are continuously differentiable. However everything still
goes through under the weaker assumption that they are continuous and piecewise continuously differentiable
with locally bounded derivative. (See example V in section \ref{examples} for a case where
$\overline{\mu}_-$ and $\overline{\mu}_+$ are only piecewise continuously differentiable.)
Differentiating  \eqref{key1} gives
\begin{equation*}
\begin{aligned}
&2\thinspace\frac{\overline{\mu}_-(a)}{f(a)}+2\thinspace\frac{\overline{\mu}_-(a)(1-F(a))f'(a)}{f^3(a)}-
\frac{\overline{\mu}_-'(a)(1-F(a))}{f^2(a)}=0,\ a\in(A_-(\mu),0);\\
&-2\thinspace\frac{\overline{\mu}_+(a)}{f(a)}+2\thinspace\frac{\overline{\mu}_+(a)F(a)f'(a)}{f^3(a)}-
\frac{\overline{\mu}_+'(a)F(a)}{f^2(a)}=0,\ a\in(0,A_+(\mu).
\end{aligned}
\end{equation*}
Multiplying through  by $f$, and noting that $f=F'$ and $f'=F''$, we can rewrite the above equations as
\begin{equation}\label{key1-diff}
\begin{aligned}
&2\thinspace\overline{\mu}_-+2\thinspace\frac{\overline{\mu}_-(1-F)F''}{(F')^2}-
\frac{\overline{\mu}_-'(1-F)}{F'}=0,\ \ \text{on}\ (A_-(\mu),0);\\
&-2\thinspace\overline{\mu}_++2\thinspace\frac{\overline{\mu}_+FF''}{(F')^2}-
\frac{\overline{\mu}_+'F}{F'}=0\ \ \text{on}\ (0,A_+(\mu).
\end{aligned}
\end{equation}
Since
$$
\big(\frac F{F'}\big)'=1-\frac{FF''}{(F')^2}\ \ \text{and}\ \  \big(\frac{1-F}{F}\big)'=-1-\frac{(1-F)F''}{(F')^2},
$$
it follows that \eqref{key1-diff} is equivalent to
\begin{equation}\label{keydiffequ}
\begin{aligned}
&2\thinspace\big(\frac{1-F}{(1-F)'}\big)'+\frac{\overline{\mu}_-'}{\overline{\mu}_-}\thinspace\frac{1-F}{(1-F)'}=0\ \ \text{on} \ (A_-(\mu),0);\\
&2\thinspace\big(\frac{F}{F'}\big)'+\frac{\overline{\mu}_+'}{\overline{\mu}_+}\thinspace\frac F{F'}=0 \ \text{on}\ (0,A_+(\mu)).
\end{aligned}
\end{equation}

We now work with the second equation in \eqref{keydiffequ}. Substituting  $H=\frac F{F'}$, we obtain the linear equation
\begin{equation*}
2H'+\frac{\overline{\mu}_+'}{\overline{\mu}_+}\thinspace H=0.
\end{equation*}
Solving for $H$ gives  $H=\text{const.}\thinspace \overline{\mu}_-^{\thinspace-\frac12}$.
 Thus, $\frac{F'}F=\text{const.}\thinspace\overline{\mu}_-^\frac12$, and solving for $F$ gives
\begin{equation}\label{F+}
F(a)=\exp(-k_1\int_a^{A_+(\mu)}\overline{\mu}_+^{\thinspace\frac12}(x)dx),\ a\in(0,A_+(\mu)),
\end{equation}
for some $k_1>0$, and thus
\begin{equation}\label{f+}
f(a)=k_1\overline{\mu}_+^{\thinspace\frac12}(a)\exp(-k_1\int_a^{A_+(\mu)}\overline{\mu}_+^{\thinspace\frac12}(x)dx),\ a\in(0,A_+(\mu)).
\end{equation}
Analyzing the first equation in \eqref{keydiffequ} similarly, we arrive at
\begin{equation}\label{F-}
F(a)=1-\exp(-k_2\int_{A_-(\mu)}^a\overline{\mu}_-^{\thinspace\frac12}(x)dx),\ a\in(A_-(\mu),0).
\end{equation}
for some $k_2>0$, and thus
\begin{equation}\label{f-}
f(a)=k_2\overline{\mu}_-^{\thinspace\frac12}(a)\exp(-k_2\int_{A_-(\mu)}^a\overline{\mu}_-^{\thinspace\frac12}(x)dx),\ a\in(A_-(\mu),0).
\end{equation}

Since $F$ and $f$ are continuous at $a=0$, it follows from \eqref{F+}-\eqref{f-}
that $k_1$ and $k_2$ must satisfy
\begin{equation*}
\begin{aligned}
&\exp(-k_1\int_0^{A_+(\mu)}\overline{\mu}_+^{\thinspace\frac12}(a)da)+
\exp(-k_2\int_{A_-(\mu)}^0\overline{\mu}_-^{\thinspace\frac12}(a)da)=1;\\
&k_1\exp(-k_1\int_0^{A_+(\mu)}\overline{\mu}_+^{\thinspace\frac12}(a)da)=
k_2\exp(-k_2\int_{A_-(\mu)}^0\overline{\mu}_-^{\thinspace\frac12}(a)da),
\end{aligned}
\end{equation*}
or equivalently,
\begin{equation}\label{k1k2}
\begin{aligned}
&\exp(-k_1\int_0^{A_+(\mu)}\overline{\mu}_+^{\thinspace\frac12}(a)da)=\frac{k_2}{k_1+k_2};\\
&\exp(-k_2\int_{A_-(\mu)}^0\overline{\mu}_-^{\thinspace\frac12}(a)da)=\frac{k_1}{k_1+k_2}.
\end{aligned}
\end{equation}
When a  function $F$, satisfying \eqref{F+} and \eqref{F-}, with $k_1,k_2$ satisfying \eqref{k1k2}, is substituted into
the left hand sides of \eqref{key1}, it will render these expressions constant in $a$, since
$F$ has been obtained by solving the differential equation obtained by setting to 0 the
derivative of  the left hand  side in each of the two equations in \eqref{key1}.
However, in order to conclude that such an $F$ is indeed a critical point of $G_2$, we still need to verify that
\eqref{key1} holds. Since the left hand sides are constant in $a$, it suffices to verify
these the equations at $a=0$ (actually as $a\to0^-$ for the first equation and as $a\to0^+$ for the second one). This yields the requirement
\begin{equation}\label{addrequire}
\begin{aligned}
&\frac{1-F(0)}{f^2(0)}=\int_{A_-(\mu)}^0\frac{\overline{\mu}_-(a)}{f(a)}da+\frac p{1-p}\int_0^{A_+(\mu)}\frac{\overline{\mu}_+(a)}{f(a)}da,\\
&\frac{F(0)}{f^2(0)}=\frac{1-p}p\int_{A_-(\mu)}^0\frac{\overline{\mu}_-(a)}{f(a)}da+
\int_0^{A_+(\mu)}\frac{\overline{\mu}_+(a)}{f(a)}da.
\end{aligned}
\end{equation}
We now use \eqref{F+}-\eqref{f-} to write \eqref{addrequire} exclusively in terms of $\mu_+$, $\mu_-$, $k_1,k_2$ and  $p$.
Using  \eqref{F+}-\eqref{f-}, we have
\begin{equation}\label{boundarycond}
\begin{aligned}
&\frac{F(0)}{f^2(0)}=\frac1{k_1^2}\exp(k_1\int_0^{A^+(\mu)}\overline{\mu}_+^{\thinspace\frac12}(a)da),\\
&\frac{1-F(0)}{f^2(0)}=\frac1{k_2^2}\exp(k_2\int_{A^-(\mu)}^0\overline{\mu}_-^{\thinspace\frac12}(a)da).
\end{aligned}
\end{equation}
Using \eqref{f+}, we have
\begin{equation}\label{boundarycondagain+}
\begin{aligned}
&\int_0^{A_+(\mu)}\frac{\overline{\mu}_+(a)}{f(a)}da=
\frac1{k_1}\int_0^{A_+(\mu)}\overline{\mu}_+^{\thinspace\frac12}(a)
\exp(k_1\int_a^{A_+(\mu)}\overline{\mu}_+^{\thinspace\frac12}(x)dx)=\\
&\frac1{k_1^2}\Big(\exp(k_1\int_0^{A_+(\mu)}\overline{\mu}_+^{\thinspace\frac12}(a)da)-1\Big),
\end{aligned}
\end{equation}
and similarly, using \eqref{f-}, we obtain
\begin{equation}\label{boundarycondagain-}
\int_{A_-(\mu)}^0\frac{\overline{\mu}_-(a)}{f(a)}da=
\frac1{k_2^2}\Big(\exp(k_2\int_{A_-(\mu)}^0\overline{\mu}_-^{\thinspace\frac12}(a)da)-1\Big).
\end{equation}
From \eqref{boundarycond}-\eqref{boundarycondagain-}, the requirement in \eqref{addrequire} can be written as
\begin{equation}\label{addrequireagain}
\begin{aligned}
&\frac{1-p}p=\frac{k_2^2}{k_1^2}\Big(\exp(k_1\int_0^{A_+(\mu)}\overline{\mu}_+^{\thinspace\frac12}(a)da)-1\Big),\\
&\frac p{1-p}=\frac{k_1^2}{k_2^2}\Big(\exp(k_2\int_{A_+(\mu)}^0\overline{\mu}_-^{\thinspace\frac12}(a)da)-1\Big).
\end{aligned}
\end{equation}

Thus, we have shown that $F$ is a critical point if and only if it satisfies \eqref{F+} and \eqref{F-}, where
$k_1$ and $k_2$ satisfy
 \eqref{k1k2} and \eqref{addrequireagain}.
 However, the pair $k_1,k_2$ is over-determined by \eqref{k1k2} and \eqref{addrequireagain}.
From these two equations, it follows that
\begin{equation}\label{kp}
\begin{aligned}
&\frac{k_1}{k_1+k_2}=p\thinspace;\\
&\frac{k_2}{k_1+k_2}=1-p.
\end{aligned}
\end{equation}
Substituting this back into \eqref{k1k2} gives
\begin{equation*}
\begin{aligned}
&\exp(-k_1\int_0^{A_+(\mu)}\overline{\mu}_+^{\thinspace\frac12}(a)da)=1-p;\\
&\exp(-k_2\int_{A_-(\mu)}^0\overline{\mu}_-^{\thinspace\frac12}(a)da)=p,
\end{aligned}
\end{equation*}
or equivalently
\begin{equation}\label{finalk}
\begin{aligned}
k_1=\frac{|\log(1-p)|}{\int_0^{A_+(\mu)}\overline{\mu}_+^{\thinspace\frac12}(a)da} \ \ \text{and}\ \
k_2=\frac{|\log p|}{\int_{A_-(\mu)}^0\overline{\mu}_-^{\thinspace\frac12}(a)da}.
\end{aligned}
\end{equation}
But \eqref{kp} and \eqref{finalk} hold simultaneously if and only if $\mu$ satisfies the square root balance
condition \eqref{srbc}.

We have now shown that if $\mu$ satisfies the square root balance condition and if  the restriction of
$\mu$  to $(A_-(\mu),A_+(\mu))$   is absolutely continuous with a piecewise continuous, locally
bounded density on, then $G_2$ possesses a unique critical point, call it  $F_0$, while otherwise $G_2$ has no critical points.
This critical point $F_0$ is given by \eqref{F+} and \eqref{F-}, where $k_1,k_2$ are as in  \eqref{kp}:
\begin{equation}\label{F0}
\begin{aligned}
&F_0(a)=\exp\Big(-\frac{|\log(1-p)|}{\int_0^{A_+(\mu)}\overline{\mu}_+^{\thinspace\frac12}(x)dx}\int_a^{A_+(\mu)}\overline{\mu}_+^{\thinspace\frac12}(x)dx\Big),\ a\in(0,A_+(\mu)),\\
&F_0(a)=1-\exp\Big(-\frac{|\log p|}{\int_{A_-(\mu)}^0\overline{\mu}_-^{\thinspace\frac12}(x)dx}\int_{A_-(\mu)}^a\overline{\mu}_-^{\thinspace\frac12}(x)dx\Big),\ a\in(A_-(\mu),0).
\end{aligned}
\end{equation}
Recall that  distributions $F$ are connected to drifts $b$ via \eqref{def-F}; thus $b=\frac D2\frac{F''}{F'}=\frac D2\frac{f'}{f}$.
Using this with \eqref{F0}, it follows that the drift $b_0$ associated with $F_0$ is given by \eqref{bestb}.

We now show $b_0$ constitutes the unique global minimum of $G_1$. Uniqueness is immediate.
Indeed, if $b_1$ is also the global minimum, then $F_1$ would be a critical point for $G_2$,
where $F_1$ corresponds to $b_1$ via \eqref{def-F}; however $F_0$ is the unique critical point of $G_2$.

We turn to showing that the global minimum occurs at $b_0$.
Recall that we are  assuming that  the restriction of $\mu$  to  $(A_-(\mu),A_+(\mu))$
is absolutely continuous with  piecewise continuous, locally bounded density.
First assume that $\mu$ possesses atoms at $A_-(\mu)$ and $A_+(\mu)$  as in Proposition \ref{G1G2}.
Let $b\in\mathcal{D}_\mu$,  and define $L_1$ as in Proposition \ref{G1G2}. Then it follows from that proposition and Proposition \ref{convex}
that $G_1(b)\ge G_1(b_0)$.

Now assume that $\mu$ possesses an atom at $A_-(\mu)$ but not at $A_+(\mu)$. (The cases in which $\mu$ possesses an atom at
$A_+(\mu)$ but not at $A_-(\mu)$, or in which $\mu$ possesses atoms at both $A_-(\mu)$ and $A_+(\mu)$ are treated similarly.)
For each $n\in\mathbb{N}$,  approximate $\mu_+$ by $\mu_{+;n}$, defined as  follows. If $A_+(\mu)<\infty$, let $\mu_{+,n}$ restricted to $(0,A_+(\mu)-\frac1n)$ coincide
with $\mu_+$ restricted to  $(0,A_+(\mu)-\frac1n)$. Also, let $\mu_{+;n}$ have an atom of mass $\mu_+\big( A_+(\mu)-\frac1n,A_+(\mu) \big)$
at $A_+(\mu)-\frac1n$. If $A_+(\mu)=\infty$,
let $\mu_{+,n}$ restricted to $(0,n)$ coincide
with $\mu_+$ restricted to  $(0,n)$. Also, let $\mu_{+;n}$ have an atom of mass $\mu_+\big((n,\infty)\big)$ at $n$.
Then $\mu_{+;n}$ converges weakly to $\mu_+$, and since the integrands are monotone, we have
\begin{equation}\label{munfrac12}
\lim_{n\to\infty}\int_0^\infty\mu^{\frac12}_{+;n}(x)dx=\int_0^\infty\mu^\frac12_+(x)dx.
\end{equation}
Now define $\mu_n=(1-p_n)\mu_-+p_n\mu_{+;n}$, where $p_n$ is defined so that $\mu_n$ satisfies the square
root balance condition \eqref{srbc}.
Note that according to the previous paragraph, $\mu_n$ is a measure of the type for which
the critical $b$, call it $b_n$, is in fact the global minimum of $G_1$.
It is given by \eqref{bestb}, with $\overline{\mu}_+$  replaced by
$\overline{\mu}_{+;n}$.
Substituting this drift  in \eqref{bform}, or equivalently in \eqref{G1}, and performing the routine calculation gives
\begin{equation}\label{muninf}
\begin{aligned}
&\inf_{b\in\mathcal{D}_{\mu_n}}\int_{\mathbb{R}}(E_0^{(b)}T_a)\mu_n(da)=\\
&\frac2D\Big(\frac{1-p_n}{|\log p_n| }\thinspace(\int_{-\infty}^0\overline{\mu}_-^\frac12(x)dx)^2+\frac{ p_n}{|\log(1-p_n)|}\thinspace(\int_0^\infty\overline{\mu}_{+;n}^\frac12(x)dx)^2\Big).
\end{aligned}
\end{equation}
By \eqref{munfrac12} and the fact that $\mu$ satisfies the square root balance condition, it follows that
\begin{equation}\label{pnconverges}
\lim_{n\to\infty}p_n=p.
\end{equation}
Since $E_0^{(b)}T_a$ is an increasing function of $a\in(0,\infty)$, by the construction of $\mu_n$ we have
\begin{equation}\label{approxmumun}
\int_{\mathbb{R}}(E_0^{(b)}T_0)\mu(da)\ge\int_{\mathbb{R}}(E_0^{(b)}T_0)\mu_n(da),\ \text{for any drift}\ b.
\end{equation}
The  critical  drift $b_0$ that we have found is  given by  \eqref{bestb}.
Substituting this drift  in \eqref{bform}  and performing the routine calculation gives
\begin{equation}\label{infforcritdrift}
\begin{aligned}
&\int_{\mathbb{R}} (E^{(b_0)}_0T_a)\thinspace\mu(da)=\\
&\frac2D\Big(\frac{1-p}{|\log p| }\thinspace(\int_{-\infty}^0\overline{\mu}_-^\frac12(x)dx)^2+\frac p{|\log(1-p)|}\thinspace(\int_0^\infty\overline{\mu}_+^\frac12(x)dx)^2\Big).
\end{aligned}
\end{equation}
From \eqref{munfrac12}-\eqref{infforcritdrift}, we conclude that $b_0$ is indeed the global minimum of $G_1$.

To complete the proof of part (ii), it remains to prove the statements that follow \eqref{bestb}.
The function
 $u(a)=\int_0^adx\exp(-\frac2D\int_0^xb_0(t)dt)$ is harmonic for the diffusion generator $\frac D2\frac{d^2}{dx^2}+b_0(x)\frac{d}{dx}$. Thus, by Ito's formula it follows that
$$
P^{(b_0)}_0(\tau_{a_1}<\tau_{a_2})=\frac{u_0(0)-u_0(a_2)}{u_0(a_1)-u_0(a_2)},\ \text{for}\ A_-(\mu)<a_1<0<a_2<A_+(\mu).
$$
 Substituting for $b_0$ above from \eqref{bestb},
 we have
 $$
 \begin{aligned}
 &u_0(a)=\begin{cases} -\int_a^0\overline{\mu}_-^{\thinspace-\frac12}(x)\exp\big(\frac{|\log p|}{\int_{-\infty}^0\overline{\mu}_-^\frac12(x)dx}\int_0^x\overline{\mu}_-^\frac12(y)dy\big),\ A_-(\mu)<a<0;\\
 \int_0^a\overline{\mu}_+^{\thinspace-\frac12}(x)\exp(-\frac{|\log(1- p)|}{\int_0^{\infty}\overline{\mu}_+^\frac12(x)dx}\int_0^x\overline{\mu}_+^\frac12(y)dy),\ 0<a< A_+(\mu).
  \end{cases}
 \end{aligned}
 $$
Then  $\lim_{a_1\to A_-(\mu)^+}u_0(a_1)$ is infinite or finite depending on whether $\int_{A_-(\mu)}\overline{\mu}^{\thinspace -\frac12}(x)dx$ is infinite or finite.
In the former case, $\lim_{a_1\to  A_-(\mu)^+}P^{(b_0)}_0(\tau_{a_1}<\tau_{a_2})$ is equal to 0, and in the latter case it is positive.
  The exact same argument holds with regard to $\overline{\mu}_+$ and $A_+(\mu)$.
\hfill$\square$
\medskip

\noindent \it Proof of part (i).\rm\
  If $\mu$ is such that the infimum is attained, as specified in part (ii), then substituting the optimal drift from \eqref{bestb} in \eqref{bform}, and performing the routine calculation shows that
 \eqref{infb} holds.

 Now assume that $\mu=(1-p)\mu_-+p\mu_+$ satisfies the square root balance condition and is  such that the infimum is not attained, as specified in part (ii).
For each $n\in\mathbb{N}$, define measures $\mu_{+;n,+}$ and $\mu_{+; n,-}$ on $(0,\infty)$ as follows.
For $k=0,1,\cdots$, let $\mu_{+;n,+}$, when restricted to $(\frac{k+1}n,\frac{k+2}n)$, be uniform with
total mass equal to $\mu_+((\frac kn,\frac{k+1}n])$.
For $k=2,\cdots$, let $\mu_{+;n,-}$, when restricted to $(\frac{k-1}n,\frac kn)$, be uniform with total
mass equal to $\mu_+((\frac kn,\frac{k+1}n])$.
Also, let $\mu_{+;n,-}$, when restricted to $(0,\frac 1n)$, be uniform with total
mass equal to $\mu_+((0,\frac2n])$
  Define respectively $\mu_{-;n,+}$ and $\mu_{-;n,-}$ in a parallel fashion for
$\mu_-$ as $\mu_{+;n,+}$ and $\mu_{+;n,-}$ were defined for $\mu_+$. Then $\mu_{+;n,+}$ and $\mu_{+;n,-}$ both converge weakly to $\mu_+$, and
$\mu_{-;n,+}$ and $\mu_{-;n,-}$ both converge weakly to $\mu_-$, as $n\to\infty$.
Therefore, since all the integrands  are  monotone, we have
\begin{equation}\label{convergence4times}
\begin{aligned}
&\lim_{n\to\infty}\int_0^\infty\overline{\mu}_{+;n,+}^\frac12(x)dx=
\lim_{n\to\infty}\int_0^\infty\overline{\mu}_{+;n,-}^\frac12(x)dx=\int_0^\infty\overline{\mu}_+^\frac12(x)dx,\\
&\lim_{n\to\infty}\int_{-\infty}^0\overline{\mu}_{-;n,+}^\frac12(x)dx=
\lim_{n\to\infty}\int_{-\infty}^0\overline{\mu}_{-;n,-}^\frac12(x)dx=\int_{-\infty}^0\overline{\mu}_-^\frac12(x)dx.
\end{aligned}
\end{equation}

Now define
$$
\begin{aligned}
&\mu_{+,n}=(1-p_{+,n})\mu_{-;n,+}+p_{+,n}\thinspace\mu_{+;n,+}\thinspace;\\
&\mu_{-,n}=(1-p_{-,n})\mu_{-;n,-}+p_{-,n}\thinspace\mu_{+;n,-}\thinspace,
\end{aligned}
$$
where $p_{+,n}$ and $p_{-,n}$ are defined
so that $\mu_{+,n}$ and $\mu_{-,n}$ satisfy the square root balance condition \eqref{srbc}.
By \eqref{convergence4times} and the fact that $\mu$ satisfies the square root balance condition, it follows
that
\begin{equation}\label{pnconv}
\lim_{n\to\infty}p_{+,n}=\lim_{n\to\infty}p_{-,n}=p.
\end{equation}
By part (ii),  the measures $\mu_{+,n}$ and $\mu_{-,n}$ are of the type for which the infimum is attained; let
$b_{+,n}$ and $b_{-,n}$ denote the corresponding drifts for which the infimum is attained.
Then
\begin{equation}\label{infmu+n-n}
\begin{aligned}
&\inf_{b\in \mathcal{D}_{\mu_{+,n}}}\int_{\mathbb{R}} (E^{(b)}_0T_a)\thinspace\mu_{+,n}(da)=
\int_{\mathbb{R}} (E^{(b_{+,n})}_0\thinspace T_a)\thinspace\mu_{+,n}(da)=\\
&\frac2D\Big(\frac{1-p_{+,n}}{|\log p_{+,n}| }\thinspace(\int_{-\infty}^0\overline{\mu}_{-;n,+}^\frac12(x)dx)^2+\frac{ p_{+,n}}{|\log(1-p_{+,n})|}\thinspace(\int_0^\infty\overline{\mu}_{+;n,+}^\frac12(x)dx)^2\Big);\\
&\inf_{b\in \mathcal{D}_{\mu_{-,n}}}\int_{\mathbb{R}} (E^{(b)}_0T_a)\thinspace\mu_{-,n}(da)=
\int_{\mathbb{R}} (E^{(b_{-,n})}_0\thinspace T_a)\thinspace\mu_{-,n}(da)=\\
&\frac2D\Big(\frac{1-p_{-,n}}{|\log p_{-,n}|}\thinspace(\int_{-\infty}^0\overline{\mu}_{-;n,-}^\frac12(x)dx)^2+\frac{ p_{-,n}}{|\log(1-p_{-,n})|}\thinspace(\int_0^\infty\overline{\mu}_{+;n,-}^\frac12(x)dx)^2\Big).
\end{aligned}
\end{equation}
Since $E_0^{(b)}T_a$ is an increasing function of $a\in(0,\infty)$ and a decreasing function
of $a\in(-\infty,0)$, it follows from the construction that
\begin{equation}\label{infsupallb}
\begin{cases}\int_0^\infty (E_0^{(b)}T_a)\mu_+(da)\le\int_0^\infty (E_0^{(b)}T_a)\mu_{+;n,+}(da),\\
\int_{\frac1n}^\infty (E_0^{(b)}T_a)\mu_{+;n,-}(da)\le\int_{\frac2n}^\infty (E_0^{(b)}T_a)\mu_+(da),\\
\int_{-\infty}^0 (E_0^{(b)}T_a)\mu_-(da)\le\int_{-\infty}^0 (E_0^{(b)}T_a)\mu_{-;n,+}(da),\\
\int_{-\infty}^{-\frac1n}(E_0^{(b)}T_a)\mu_{-;n,-}(da)\le\int_{-\infty}^{-\frac2n}(E_0^{(b)}T_a)\mu_-(da),
\end{cases}
\ \text{for any drift}\ b.
\end{equation}
 The proof of part (i) now follows from \eqref{convergence4times}-\eqref{infsupallb}.\hfill $\square$
\medskip

\noindent \it Proof of part (iii).\rm\ In the proof of part (i) above, we proved the statement in part (iii) for
two particular sequences  $\{b_n\}_{n=1}^\infty$; namely for what we called  $\{b_{+,n}\}_{n=1}^\infty$
and $\{b_{-,n}\}_{n=1}^\infty$.
We leave it to the reader to do the routine analysis to show that the result holds more generally as stated in part (iii).
\hfill $\square$
\section{Proofs of Theorems \ref{thm1} and \ref{thm3}}\label{pfthm13}
\it \noindent Proof of Theorem \ref{thm1}.\rm\
For the proof of part (i) of Theorem \ref{thm2}, we constructed the measures
$\mu_{-,n}=(1-p_{-,n})\mu_{-;n,-}+p_{-,n}\thinspace\mu_{+;n,-}$.
For the proof here we consider the measures
\begin{equation}\label{truncatedver}
\mu_{-,n,\text{tr}}:=(1-p_{-,n,\text{tr}})\mu_{-;n,-,\text{tr}}+p_{-,n,\text{tr}}\thinspace\mu_{+;n,-,\text{tr}},
\end{equation}
where
$\mu_{+;n,-,\text{tr}}$ and $\mu_{-;n,-,\text{tr}}$ are appropriately truncated versions of
$\mu_{+;n,-}$ and $\mu_{-;n,-}$, and $p_{-,n,\text{tr}}$ is chosen so that $\mu_{-,n,\text{tr}}$ satisfies
the square root balance condition.

The truncated version, $\mu_{+;n,-,\text{tr}}$   of $\mu_{+;n,-}$, is defined as follows.
Let $\mu_{+;n,-,\text{tr}}$, restricted to
$(0,n]$
 coincide with   $\mu_{+;n,-}$ on  $(0,n]$, and
 let $\mu_{+;n,-,\text{tr}}$, restricted to
$(n,n+\frac1n)$ be uniform with total mass equal to $\mu_+((n+\frac1n,\infty))$.
The truncated version, $\mu_{-;n,-,\text{tr}}$   of $\mu_{-;n,-}$, is defined
in the exact parallel fashion on $(-\infty,0)$.

The measures $\mu_{-;n,-\text{tr}}$ and
$\mu_{+;n,-\text{tr}}$ converge weakly to $\mu_{-;n}$ and to $\mu_{+;n}$; thus,
\begin{equation}\label{infsquareroot}
\begin{aligned}
&\lim_{n\to\infty}\int_{-\infty}^0\overline{\mu}_{-;n,-,\text{tr}}^\frac12(x)dx=
\int_{-\infty}^0\overline{\mu}_-^\frac12(x)dx=\infty;\\
&\lim_{n\to\infty}\int_0^\infty\overline{\mu}_{+;n,-,\text{tr}}^\frac12(x)dx=
\int_0^\infty\overline{\mu}_+^\frac12(x)dx=\infty.
\end{aligned}
\end{equation}
By part (ii)  of Theorem \ref{thm1}, the measure $\mu_{-,n,\text{tr}}$ is  of the type for which the infimum is attained;
let $b_{-,n,\text{tr}}$ denote the corresponding drift  for which the infimum is attained. Then
\begin{equation}\label{infforapprox}
\begin{aligned}
&\inf_{b\in \mathcal{D}_{\mu_{-,n,\text{tr}}}}\int_{\mathbb{R}} (E^{(b)}_0T_a)\thinspace\mu_{-,n,\text{tr}}(da)=
\int_{\mathbb{R}} (E^{(b_{-,n,\text{tr}})}_0\thinspace T_a)\thinspace\mu_{-,n,\text{tr}}(da)=\\
&\frac2D\Big(\frac{1-p_{-,n,\text{tr}}}{|\log p_{-,n,\text{tr}}|}\thinspace(\int_{-\infty}^0\overline{\mu}_{-;n,-,\text{tr}}^\frac12(x)dx)^2+\frac{ p_{-,n,\text{tr}}}{|\log(1-p_{-,n,\text{tr}})|}\thinspace(\int_0^\infty\overline{\mu}_{+;n,-,\text{tr}}^\frac12(x)dx)^2\Big).
\end{aligned}
\end{equation}
As in \eqref{infsupallb}, we have
\begin{equation}\label{infallb}
\begin{cases}
\int_{\frac1n}^\infty (E_0^{(b)}T_a)\mu_{+;n,-,\text{tr}}(da)\le\int_{\frac2n}^\infty (E_0^{(b)}T_a)\mu_+(da),\\
\int_{-\infty}^{-\frac1n}(E_0^{(b)}T_a)\mu_{-;n,-,\text{tr}}(da)\le\int_{-\infty}^{-\frac2n}(E_0^{(b)}T_a)\mu_-(da),
\end{cases}
\ \text{for any drift}\ b.
\end{equation}
In our construction, we have no control over $p_{-,n,\text{tr}}\in(0,1)$.  Note
that
$$
\lim_{p\to1}\frac{1-p}{|\log p|}=\lim_{p\to0}\frac p{|\log(1-p)|}=1;\ \  \lim_{p\to0}\frac{1-p}{|\log p|}=\lim_{p\to1}\frac p{|\log(1-p)|}=0.
$$
Keeping this in mind, Theorem \ref{thm1} now follows from \eqref{infsquareroot}-\eqref{infallb}.
\hfill $\square$
\medskip

\noindent \it Proof of Theorem \ref{thm3}.\rm\ By assumption, $\mu$ does not satisfy
the square root balance condition. The
 proof of part (ii) of Theorem \ref{thm2} revealed that
in such a case, there are no critical points. Thus, the infimum is not attained, proving part (i).
Part (ii) follows from part (i) and part (iii). For part (iii), one substitutes the drift from
\eqref{bestb} into
\eqref{bform} and performs a routine calculation. One obtains the expression in Remark 1 after
the statement of the theorem. A bit of algebra converts this to the expression on the right hand side
of \eqref{notinf}.
\hfill $\square$

\section{Proof of Propostion \ref{exphit}}\label{pfexphit}
We will prove the proposition for $a\in(0,A_+(\mu))$; a similar proof holds for $a\in(A_-(\mu),0)$.
First assume that $A_-(\mu)>-\infty$ and that the diffusion cannot reach $A_-(\mu)$; that is $P_0(T_{A_-(\mu)}<\infty)=0$. As was noted in the discussion following \eqref{support},
this is equivalent to the condition
\begin{equation}\label{recurrencecond}
\int_{A_-(\mu)}dx\exp(-\frac2D\int_0^xb(y)dy)=\infty.
\end{equation}
Then $E_0T_a=\lim_{n\to\infty}E_0T_a\wedge T_{A_-(\mu)+\frac1n}$. Define $u_n(x)=E_xT_a\wedge T_{A_-(\mu)+\frac1n}$, for $x\in[A_-(\mu)+\frac1n,a]$.
By Ito's formula, $u_n$ solves the differential equation
\begin{equation}\label{un}
\begin{aligned}
&\frac D2u_n''+b(x)u_n'=-1,\ x\in(A_-(\mu)+\frac1n,a);\\
&u_n(A_-(\mu+\frac1n))=u_n(a)=0.
\end{aligned}
\end{equation}
Writing the differential equation in the form
$$
\frac D2\Big(\exp\big(\int_a^x\frac 2Db(y)dy\big)u_n'(x)\Big)'=-\exp(\int_a^x\frac 2Db(y)dy),
$$
integrating twice and using the boundary conditions, we obtain
\begin{equation}\label{un}
\begin{aligned}
&u_n(x)=-u_n'(a)\int_a^xdy\exp(-\int_a^y\frac 2Db(t)dt)-\\
&\frac2D\int_x^ady\exp(-\int_a^y\frac 2Db(t)dt)\int_y^a\exp(\int_a^z\frac 2Db(t)dt),
\end{aligned}
\end{equation}
where
$$
u_n'(a)=-\frac{\frac2D\int_{A_-(\mu)+\frac1n}^ady\exp(-\int_a^y\frac 2Db(t)dt)\int_y^adz\exp(\int_a^z\frac 2Db(t)dt)}{\int_{A_-(\mu)+\frac1n}^ady\exp(-\int_a^y\frac 2Db(t)dt)}.
$$
By \eqref{recurrencecond},
\begin{equation}\label{constant}
\lim_{n\to\infty}u_n'(a)=-\frac2D\int_{A_-(\mu)}^adz\exp(\frac2D\int_a^zb(t)dt).
\end{equation}
From \eqref{un} and \eqref{constant} it follows that
$$
E_0T_a=\lim_{n\to\infty}u_n(0)= \frac2D\int_0^adx\exp(-\int_a^x\frac2Db(y)dy)\int_{A_-(\mu)}^xdz\exp(\int_a^z\frac2Db(t)dt).
$$
The number $a$ appearing twice as a lower limit of an integral on the right hand side above can be changed to any other value without changing the value of the right hand side.
Changing $a$ to 0 gives the formula for $E_0T_a$ appearing in the statement of the proposition.

In the case that $A_-(\mu)=-\infty$,  our assumption \eqref{posrec} of positive recurrence ensures that \eqref{recurrencecond} holds, and thus that
$P_0(T_{-\infty}<\infty)=0$. Thus, $E_0T_a=\lim_{n\to\infty}E_0T_a\wedge T_{-n}$.
One now proceeds as in the case  treated above, replacing $A_-(\mu)+\frac1n$ by $-n$.

In the case that $A_-(\mu)>-\infty$ and that \eqref{recurrencecond} does not hold, one has $P_0(T_{A_-(\mu)}<\infty)>0$.
In this case, we have placed a drift equal to $+\infty$ to the left of $A_-(\mu)$, and this is equivalent considering the diffusion with reflection
at $A_-(\mu)$.
Let $u(x)=E_xT_a$. Then by Ito's formula,
$u$ satisfies
\begin{equation}
\begin{aligned}
&\frac D2u''+b(x)u'=-1,\ x\in(A_-(\mu),a);\\
&u'(A_-(\mu))=u(a)=0.
\end{aligned}
\end{equation}
Solving this similarly but more simply than we solved the above equations, we obtain    the formula for $E_0T_a$ appearing in the statement of the proposition.
\hfill $\square$
\section{Proofs of Propositions \ref{convex} and \ref{G1G2}}\label{pfprop34}
\noindent \it Proof of Proposition \ref{convex}.\rm\
We first show that the set $\mathcal{D}_\mu$ is convex.
Let $b,\beta\in\mathcal{D}_\mu$. We need to show that  $(1-t)b+t\beta\in\mathcal{D}_\mu$, for $t\in(0,1)$; that is, that $(1-t)b+t\beta$ satisfies \eqref{driftcond} and \eqref{posrec}.
Now \eqref{driftcond} holds trivially. For \eqref{posrec}, we use H\"older's inequality with $p=\frac1{1-t}$ and $q=\frac1t$ to obtain
\begin{equation*}
\begin{aligned}
&\int_{A_-(\mu)}^{A_+(\mu)} dx\exp(\frac2D\int_0^x\big((1-t)b+t\beta\big)(y)dy)\le\\
& \big(\int_{A_-(\mu)}^{A_+(\mu)} dx\exp(\frac2D\int_0^xb(y)dy)\big)^{1-t}
\big(\int_{A_-(\mu)}^{A_+(\mu)} dx\exp(\frac2D\int_0^x\beta(y)dy)\big)^t<\infty.
\end{aligned}
\end{equation*}

We now prove that $G_1$ is convex.
Recall the definition of $G_1$ from \eqref{G1}.
We will show that
$$
H(b):=(1-p)\int_{A_-(\mu)}^0da\thinspace\overline{\mu}_-(a)\Big[\exp(-\int_0^a\frac2Db(y)dy)\int_a^{A_+(\mu)}dz\exp(\int_0^z\frac2Db(s)ds)\Big]
$$
is  convex. The same proof works for the second term in $G_1$.
We rewrite $H$ as
\begin{equation}\label{H1}
H(b)=(1-p)\int_{A_-(\mu)}^0da\thinspace\overline{\mu}_-(a)\int_a^{A_+(\mu)}dz\exp(\int_a^z\frac2Db(s)ds).
\end{equation}
It follows by the  convexity of the function $e^x$ on all of $\mathbb{R}$ that
\begin{equation}\label{convexexp}
\begin{aligned}
&\exp(\int_a^z\frac2D\big((1-t)b+t\beta\big)(s)ds)\le (1-t)\exp(\int_a^z\frac2Db(s)ds)+\\
&t\exp(\int_a^z\frac2D\beta(s)ds),\ 0\le t\le 1.
\end{aligned}
\end{equation}
Substituting \eqref{convexexp} into \eqref{H1} gives
$H\big((1-t)b+t\beta)\le (1-t)H(b)+tH(\beta)$, for $0\le t\le 1$, proving the convexity.\hfill $\square$
\medskip

\noindent \it Proof of Proposition \ref{G1G2}.\rm\
Define
$$
\begin{aligned}
& \hat F_\epsilon(a)=\int_{A_-(\mu)}^adz\exp(\int_0^z\frac2D\big((1-\epsilon)b_0+\epsilon b\big)(t)dt)\ \ \ \text{and}\ \
 \hat f_\epsilon(a)= \hat F_\epsilon'(a).
\end{aligned}
$$
Recalling  $G_1$ from \eqref{G1}, and recalling that $G_2$ from \eqref{G2} has been defined for positive multiples of distribution functions, we can write
\begin{equation}\label{otherform}
\begin{aligned}
&G_1\big((1-\epsilon)b_0+\epsilon b)\big)=(1-p)\int_{A_-(\mu)}^0\overline{\mu}_-(a)\frac{\hat F_\epsilon(A_+(\mu))- \hat F_\epsilon(a)}{ \hat f_\epsilon(a)}+\\
&p\int_0^{A_+(\mu)}\overline{\mu}_-(a)\frac{ \hat F_\epsilon(a)}{\hat f_\epsilon(a)}=G_2(\hat F_\epsilon).
\end{aligned}
\end{equation}
Also define
\begin{equation}\label{hatQ}
\begin{aligned}
&\hat Q(a)=\lim_{\epsilon\to0^+}\frac{ \hat F_\epsilon(a)- \hat F_0(a)}\epsilon=\int_{A_-(\mu)}^adz\big(\int_0^z\frac2D(b-b_0)(t)dt\big)\exp(\int_0^z\frac2Db_0(t)dt);\\
&\hat q(a)=\hat Q'(a).
\end{aligned}
\end{equation}
The second equality in the first line of \eqref{hatQ} follows
from the bounded convergence theorem and the assumptions in the statement of the proposition.
Using \eqref{otherform} and \eqref{hatQ} we have, similar to \eqref{var-1},
\begin{equation}\label{anotherformderiv}
\begin{aligned}
&L'(0^+)=\lim_{\epsilon\to0^+}\frac{L_1(\epsilon)-L_1(0)}\epsilon=\\
&\lim_{\epsilon\to0^+}\frac{G\big((1-\epsilon)b_0+\epsilon b)-G_1(b_0)}\epsilon=
\lim_{\epsilon\to0^+}\frac{G_2(\hat F_\epsilon)-G_2(\hat F_0)}\epsilon=\\
&(1-p)\int_{A_-(\mu)}^0\overline{\mu}_-(a)\Big(\frac{\hat Q(A_+(\mu))-\hat Q(a)}{ f_0(a)}-\frac{(1-F_0(a))\hat q(a)}{f_0^2(a)}\Big)da+\\
&p\int_0^{A_+(\mu)}\overline{\mu}_+(a)\Big(\frac{\hat Q(a)}{f_0(a)}-\frac{F_0(a)\hat q(a)}{f_0^2(a)}\Big)da.
\end{aligned}
\end{equation}
The second equality above follows from the bounded convergence theorem and the assumptions in the statement of the proposition.

We need to consider the cases  $\hat Q(A_+(\mu))\neq0$ and    $\hat Q(A_+(\mu))=0$ separately. First consider the case
  $\hat Q(A_+(\mu))\neq0$. Define $\bar Q(a)=\frac{\hat Q(a)}{\hat Q(A_+(\mu))}$ and $\bar q(a)=\bar Q'(a)$.
Then the right hand side of \eqref{anotherformderiv} will be equal to 0 if and only if
\begin{equation}\label{likevar-1}
\begin{aligned}
&(1-p)\int_{A_-(\mu)}^0\overline{\mu}_-(a)\Big(\frac{1-\bar Q(a)}{ f_0(a)}-\frac{(1-F_0(a))\bar q(a)}{f_0^2(a)}\Big)da+\\
&p\int_0^{A_+(\mu)}\overline{\mu}_+(a)\Big(\frac{\bar Q(a)}{f_0(a)}-\frac{F_0(a)\bar q(a)}{f_0^2(a)}\Big)da=0.
\end{aligned}
\end{equation}
Recall that since $F_0$ is the critical point of $G_2$, \eqref{var-1} holds with $F_0$ and $f_0$ substituted for $F$ and $f$.
The only difference between \eqref{var-1}, with $F_0$ and $f_0$ substituted for $F$ and $f$, and \eqref{likevar-1} is that $\hat  Q$ and $\hat q$ appear  in \eqref{likevar-1}  while $Q$ and $q$ appear in
 \eqref{var-1}, where  $Q$ is a distribution function with compactly supported density $q$. However, the  analysis from \eqref{var-1} to \eqref{key1} goes through just the same for $\bar Q$, since $\bar Q(A_+(\mu))=1$. (Neither the monotonicity of $Q$ nor the compact support of $q$ was used there; only the fact
that $Q(A_+(\mu))=1$.)
Thus, the right hand side of \eqref{anotherformderiv} is equal to 0, proving the proposition.

Now consider the case   $\hat Q(A_+(\mu))=0$. Because $\hat Q(A_+(\mu))=0$, whereas in \eqref{var-1} one had $Q(A_+(\mu))=1$,  the analysis that showed that the left hand side of \eqref{var-1} is equal to the left hand side of \eqref{var-2}, when applied to the last two lines of  \eqref{anotherformderiv},
shows that
\begin{equation}\label{QA0}
\begin{aligned}
&(1-p)\int_{A_-(\mu)}^0\overline{\mu}_-(a)\Big(\frac{\hat Q(A_+(\mu))-\hat Q(a)}{ f_0(a)}-\frac{(1-F_0(a))\hat q(a)}{f_0^2(a)}\Big)da+\\
&p\int_0^{A_+(\mu)}\overline{\mu}_+(a)\Big(\frac{\hat Q(a)}{f_0(a)}-\frac{F_0(a)\hat q(a)}{f_0^2(a)}\Big)da=\\
&(1-p)\int_{A_-(\mu)}^0\hat q(a)\Big[-\int_a^0\frac{\overline{\mu}_-(x)}{f_0(x)}dx-\frac{\overline{\mu}_-(a)(1-F_0(a))}{f_0^2(a)}\Big]da+\\
&p\int_0^{A_+(\mu)}\hat q(a)\Big[-\int_0^a\frac{\overline{\mu}_+(x)}{f_0(x)}dx-\frac{\overline{\mu}_+(a)F_0(a)}{f_0^2(a)}\Big].
\end{aligned}
\end{equation}
(The right hand side of \eqref{QA0}  corresponds to the first two lines  of \eqref{var-2}. The two terms on the third line of \eqref{var-2} do not appear now because $\hat Q(A_+(\mu))=0$.)
Because $F_0$ is critical, it follows from \eqref{key1} that
 \begin{equation}\label{C1}
\begin{aligned}
& (1-p)\Big[\int_a^0\frac{\overline{\mu}_-(x)}{f_0(x)}dx+\frac{\overline{\mu}_-(a)(1-F_0(a))}{f_0^2(a)}\Big]=C_1,\ a\in(A_-(\mu),0);\\
&p\Big[-\int_0^a\frac{\overline{\mu}_+(x)}{f_0(x)}dx-\frac{\overline{\mu}_+(a)F_0(a)}{f_0^2(a)}\Big]=C_1,\ \ a\in(0,A_+(\mu)),
\end{aligned}
\end{equation}
where $C_1=(1-p)\int_{A_-(\mu)}^0\frac{\overline{\mu}_-(a)}{f_0(a)}da+p\int_0^{A_+(\mu)}\frac{\overline{\mu}_+(a)}{f_0(a)}da$.
Substituting \eqref{C1} into the right hand side of \eqref{QA0}, we conclude that the right hand side of \eqref{QA0} is equal
to $C_1\int_{A_-(\mu)}^0\hat q(a)da+C_1\int_0^{A_+(\mu)}\hat q(a)da=C_1\int_{A_-(\mu)}^{A_+(\mu)}\hat q(a)da=C_1\hat Q(A_+(\mu))=0$.
Thus, the left hand side of  \eqref{QA0}, which is the right hand side  of \eqref{anotherformderiv}, is equal to 0. \hfill $\square$

\end{document}